\DeclareMathOperator{\ad}{ad}
\DeclareMathOperator{\lie}{\mathfrak{lie}}
\DeclareMathOperator{\grt}{\mathfrak{grt}}
\DeclareMathOperator{\ch}{ch}
\newcommand\Lie[1]{\mathfrak{#1}}
\newcommand{\g}{\Lie{g}}
\renewcommand{\t}{\Lie{t}}
\renewcommand{\k}{\mathbb{K}}
\newcommand{\R}{\mathbb{R}}
\newcommand{\Z}{\mathbb{Z}}
\newcommand{\Q}{\mathbb{Q}}
\newcommand{\gt}{\mathfrak{gt}}
\theoremstyle{plain}
\newtheorem{theorem}{Theorem}[section]
\newtheorem{proposition}[theorem]{Proposition}
\theoremstyle{definition}
\newtheorem{remark}{Remark}[section]
\begin{document}

\title[]{On rational Drinfeld associators}

\author{Anton Alekseev}

\address{Section of Mathematics, University of Geneva, 2-4 rue du Li\`evre,
c.p. 64, 1211 Gen\`eve 4, Switzerland}

\email{Anton.Alekseev@unige.ch}

\author{Masha Podkopaeva}

\address{Section of Mathematics, University of Geneva, 2-4 rue du Li\`evre,
c.p. 64, 1211 Gen\`eve 4, Switzerland}

\email{Maria.Podkopaeva@unige.ch}

\author{Pavol \v Severa}
\address{Section of Mathematics, University of Geneva, 2-4 rue du Li\`evre,
c.p. 64, 1211 Gen\`eve 4, Switzerland, on leave from FMFI UK Bratislava, Slovakia}
\email{Pavol.Severa@gmail.com}


\begin{abstract}
We prove an estimate on denominators of rational Drinfeld associators.
To obtain this result, we prove the corresponding estimate for the $p$-adic associators stable under the action of suitable elements of $\text{Gal}(\bar{\mathbb{Q}}/\mathbb{Q})$.
As an application, we settle in the positive Duflo's question on the
Kashiwara--Vergne factorizations of the Jacobson element $J_p(x,y)=(x+y)^p-x^p-y^p$
in the free Lie algebra over a field of characteristic $p$. Another application is a new estimate on  denominators of the Kontsevich knot invariant.
\end{abstract}

\subjclass{}

\maketitle

\section{Introduction}
Drinfeld associators were defined in \cite{Dr} and play an important
role in many fields of mathematics, including number theory
(for recent developments, see \cite{numbertheory}), low-dimensional
topology \cite{Bar-Natan 2, Bar-Natan, Le Murakami}, Lie theory \cite{EK}, and deformation quantization
\cite{Tamarkin}. In literature, there are
two examples of Drinfeld
associators defined by explicit formulas. The Knizhnik--Zamolodchikov
associator $\Phi_\textit{KZ}$ of \cite{Dr} is defined over $\mathbb{C}$ and
expressed in terms of iterated integrals and multiple zeta values.
An explicit example over $\mathbb{R}$ is given in \cite{AT'} and \cite{SW}
in terms of Kontsevich integrals over configuration spaces.

The existence of rational associators was proved in \cite{Dr}. Constructing
an explicit example is considered to be a major
problem in associator theory, and (to the best of our knowledge) it remains
open.

In this paper, we obtain estimates on denominators of rational associators. More
precisely, we define the set of {\em natural} rational associators
with the property that the denominator
in its degree $n$ component is a divisor of
\begin{equation}\label{D(n)}
D(n) =  \prod_{\substack{{p \text{ prime}}\\ p\leq n+1}}
p^{b_p(n)},
\end{equation}
where
$$
\qquad b_p(n)=\Big[\frac{p}{(p-1)^2}\,n-\frac{1}{p-1}\Big],
$$
and $[\alpha]$ denotes the integer part of $\alpha\in\mathbb{R}$. Natural associators  have non-zero convergence
radius in the $p$-adic norm for every prime $p$.

 Our main result is Theorem \ref{maintheorem},
which shows that the set of natural associators is nonempty.
Our strategy for proving Theorem \ref{maintheorem} is as follows. There is a classical
Galois theory result of Drinfeld stating that the Grothendieck--Teichm\"uller
group character $\chi: {\rm GT}_p \rightarrow \mathbb{Z}_p^*$ is surjective.
If we choose an element $g_p\in{\rm GT}_p$ such that $\chi(g_p)$ generates a dense subgroup of $\mathbb{Z}_p^*$, then the unique $p$-adic associator fixed by $g_p$ satisfies the required denominator estimates. We use these $p$-adic associators for all primes $p$ to prove existence of a rational associator with the same estimates on denominators.

For a natural associator, the prime $p>2$ appears for the first time in the
denominator of the component of degree $p-1$. We show that this part of the
estimate is optimal. We use this observation to settle in the positive
Duflo's question \cite{Duflo} on Kashiwara--Vergne factorizations of
the Jacobson element. 

In more detail, the Kashiwara-Vergne problem \cite{KV} in
Lie theory is to find a factorization of the Campbell-Hausdorff series
in the form
$$
x+y-\ln(e^ye^x)=(1-\exp(-\ad_x)) A(x,y)+ (\exp(\ad_y)-1) B(x,y),
$$
where $A(x,y)$ and $B(x,y)$ are Lie series which satisfy an additional linear equation
with coefficients given by Bernoulli numbers (for details see {\em e.g.} \cite{bourbaki}).
The factor $p^{-1}$ appears for the first time in 
$\ln(e^ye^x)$ in degree $p$. The corresponding residue is the Jacobson element
$J_p(x,y)=(x+y)^p-x^p-y^p$. Hence, it is natural to conjecture that $A(x,y)$ and $B(x,y)$
are $p$-integral up to degree $p-2$ and have a simple pole in degree $p-1$.
If this is the case, one obtains a decomposition of the Jacobson element 
$$
J_p(x,y)=[x, a(x,y)] + [y, b(x,y)] \mod p,
$$
where $a(x,y)$ and $b(x,y)$ are residues of $A(x,y)$ and $B(x,y)$, respectively.
Finding such decompositions (with an extra constraint similar to the one in the
Kashiwara-Vergne problem) is the Duflo's question.

Another application of our results is to the theory of knot invariants. In \cite{Kontsevich} Kontsevich constructed a universal finite
type invariant of knots in $\R^3$. For a knot $K$, this invariant is denoted $I(K)$, and it takes values
in the graded algebra of chord diagrams ${\rm Chord}(\mathbb{Q})$. There is a combinatorial construction
of $I(K)$ which uses an arbitrary Drinfeld associator \cite{Bar-Natan 2, Le Murakami}. It is remarkable that the final result 
is independent of the associator used in the computation. In particular, by choosing a
natural rational associator we obtain a new estimate (Theorem \ref{something}) on
denominators of $I(K)$,
$$
I(K) \in \sum_{n=0}^\infty  \, D(n)^{-1} 
{\rm Chord}_n(\mathbb{Z}).
$$
This improves the estimate of \cite{Thang Le} (where the analogue
of $b_p(n)$ is quadratic in $n$).

In Appendix we prove a denominator estimate for the elements $(1,\psi)$ of the Grothendieck--Teichm\"uller Lie algebra $\mathfrak{gt}(\mathbb{Q}_p)$.

\subsection*{Acknowledgements}
 We are grateful to M. Duflo for introducing us to the
factorization problem for the Jacobson element. This problem played a key role
in our study. We would like to thank D. Bar-Natan, B. Enriquez and H. Furusho for interesting
discussions. Our research was suppoted in part by grants 200020-126817
and 200020-120042 of the Swiss National Science Foundation.

\section{The Grothendieck--Teichm\"uller group}

\subsection{Groups ${\rm GT}$ and ${\rm GRT}$}
Let $\k$ be a field of characteristic zero, and let
$\lie_n(\k)=\lie(x_1,\dots, x_n;\k)$ be the degree completion
of the free Lie algebra over $\k$ with generators $x_1, \dots, x_n$.
For instance, the Campbell--Hausdorff series
$\ch(x_1, \dots, x_n)=\ln(e^{x_1}\dots e^{x_n})$ is an element
of $\lie_n(\k)$.

Let $F_n$ be a free group with generators $X_1, \dots, X_n$,
and ${\rm PB}_n$ be the pure braid group for $n$ strands with standard
generators  $X_{i,j}$ for $i<j$ (the strand $i$ makes a tour around
the strand $j$). Denote by $F_n(\k)$ and ${\rm PB}_n(\k)$ their
$\k$-prounipotent completions. By putting $x_i=\ln(X_i)$ one recovers
the free Lie algebra $\lie_n(\k)$ with generators $x_1, \dots, x_n$.
Similarly, by putting $x_{i,j}=\ln(X_{i,j})$
one obtains the (filtered) Lie algebra $\mathfrak{pb}_n(\k)$ with
generators $x_{i,j}$ for $1 \leq i < j \leq n$. The associated graded
Lie algebra is the Lie algebra $\mathfrak{t}_n(\k)$ of infinitesimal braids
with generators $t_{i,j}=t_{j,i}$ for $i,j=1,\dots,n$, $i\neq j$ and
relations $[t_{i,j}, t_{i,k}+t_{j,k}]=0$ for all triples $i,j,k$
and $[t_{i,j}, t_{k,l}]=0$ for distinct $i,j,k$, and $l$.

For a commutative ring $R$, we will denote by $R\langle x_1, \dots, x_n
\rangle^k$ the $R$-module spanned by homogeneous non-commutative polynomials
of degree $k$ with coefficients in $R$, by $R\langle x_1, \dots, x_n
\rangle^{\leq k}$ the module spanned by non-commutative polynomials of degree
at most $k$, and by $R\langle\!\langle x_1, \dots, x_n \rangle\!\rangle^{\geq k}$
the module spanned by non-commutative formal power series of degree at
least $k$. Recall that one can view $F_n(\k)$ as the set of group-like
elements in $\k\langle\!\langle x_1, \dots, x_n \rangle\!\rangle$ equipped
with the standard co-product $\Delta(x_i)=x_i \otimes 1 + 1 \otimes x_i$.

The Grothendieck--Teichm\"uller group ${\rm GT}(\k)$ is defined in \cite{Dr}
as the set of pairs $(\lambda, f)$ with $\lambda \in \k^*$
and $f \in F_2(\k)$ such that
\begin{equation} \label{GT1}
f(x,y)=f(y,x)^{-1},
\end{equation}
\begin{equation}  \label{GT2}
f(z,x)e^{mz} f(y, z) e^{my} f(x, y) e^{mx} =1,
\end{equation}
where $e^xe^ye^z=1$ and $m=(\lambda-1)/2$, and
\begin{multline}  \label{GT3}
f(x_{1,2}, \ch(x_{2,3},x_{2,4}))f(\ch(x_{1,3},x_{2,3}), x_{3,4})=  \\
f(x_{2,3},x_{3,4})f(\ch(x_{1,2},x_{1,3}), \ch(x_{2,4},x_{3,4}))f(x_{1,2},x_{2,3}).
\end{multline}
The last equation is understood as an equality in ${\rm PB}_4(\k)$.
The group law of ${\rm GT}(\k)$ is defined by the formula
$(\lambda_1, f_1) \cdot (\lambda_2, f_2)=(\lambda, f)$, where
$\lambda=\lambda_1\lambda_2$ and
\begin{equation}  \label{grtgroup}
f(x,y)=f_1(\lambda_2 f_2(x,y)xf_2(x,y)^{-1}, \lambda_2 y)f_2(x,y).
\end{equation}
We denote by $\chi: {\rm GT}(\k) \rightarrow \k^*$ the group
homomorphism defined by the formula $\chi(\lambda, f)=\lambda$.

The Lie algebra $\mathfrak{gt}(\k)$ (corresponding to the group ${\rm GT}(\k)$)
is the set of pairs $(s, \psi)$ with $s\in \k$ and
$\psi\in \lie_2(\k)$ such that
\begin{equation}  \label{gt1}
\psi(x,y)=-\psi(y,x),
\end{equation}
\begin{equation}   \label{gt2}
\psi(x,y)+\psi(y,z)+\psi(z,x)+
\frac{s}{2} (x+y+z)=0,
\end{equation}
for $\ch(x,y,z)=0$ (i.e., $z=-\ch(x,y)$), and
\begin{multline}  \label{gt3}
\psi(x_{1,2}, \ch(x_{2,3}, x_{2,4})) + \psi(\ch(x_{1,3}, x_{2,3}), x_{3,4})= \\
\psi(x_{2,3}, x_{3,4})+ \psi(\ch(x_{1,2}, x_{1,3}), \ch(x_{2,4},x_{3,4}))
+\psi(x_{1,2},x_{2,3}).
\end{multline}
The last equation is understood as an equality in the Lie algebra
$\mathfrak{pb}_4(\k)$.

The kernel of the Lie homomorphism $\chi: (s, \psi) \mapsto s \in \k$
is the Lie subalgebra $\mathfrak{gt}_1(\k)$. It admits a graded version
$\mathfrak{grt}(\k)$ formed by elements $\psi \in \mathfrak{lie}_2(\k)$ satisfying
equation \eqref{gt1}, equation
\begin{equation}  \label{grt2}
\psi(x,y)+\psi(y,z)+\psi(z,x)=0,
\end{equation}
for $x+y+z=0$, and
\begin{multline}  \label{grt3}
\psi(t_{1,2}, t_{2,3}+t_{2,4}) + \psi(t_{1,3}+t_{2,3}, t_{3,4})= \\
\psi(t_{2,3}, t_{3,4})+ \psi(t_{1,2}+t_{1,3}, t_{2,4}+t_{3,4})
+\psi(t_{1,2},t_{2,3}).
\end{multline}
Here the grading is induced by the natural grading of $\mathfrak{lie}_2(\k)$.
The corresponding group is denoted by ${\rm GRT}(\k)$. This group is equipped with the group law
$f_1 \cdot f_2 =f$, where $f(x,y)=f_1(f_2(x,y)\,x\,f_2(x,y)^{-1},y)f_2(x,y)$.

\subsection{The group ${\rm GT}$ over $\mathbb{Q}_p$}
Let $p>2$ be a prime, $\mathbb{Q}_p$ be the field of rational
$p$-adic numbers, and $\Z_p$ be the ring of $p$-adic integers. Consider the subgroup
${\rm GT}_p \subset {\rm GT}(\mathbb{Q}_p)$ that consists of the
pairs $(\lambda, f)$ with $\lambda \in \mathbb{Z}_p^*$ and
$f \in (F_2)_p$, where $(F_2)_p$ is the pro-$p$ completion of the
free group $F_2$. Recall \cite{Lazard} that the elements of $(F_2)_p$
can be viewed as group-like elements in
$
\mathbb{Z}_p\langle\!\langle \hat{x},\hat{y} \rangle\!\rangle \subset
\mathbb{Q}_p\langle\!\langle {x},{y} \rangle\!\rangle
$,
where $\hat{x}=e^x-1,$ $\hat{y}=e^y-1$, and the standard co-product
is given on generators by $\Delta(\hat{x})=1\otimes \hat{x}+
\hat{x}\otimes 1+\hat{x}\otimes \hat{x},$ $\Delta(\hat{y})=1\otimes \hat{y}+
\hat{y}\otimes 1+\hat{y}\otimes \hat{y}$.

We shall need the following classical result which follows  from surjectivity of the $p$\nobreakdash-adic cyclotomic character $\text{Gal}(\bar{\mathbb{Q}}/\mathbb{Q})\to\mathbb{Z}_p^*$ and from  the fact that this character is the composition of a group morphism $\text{Gal}(\bar{\mathbb{Q}}/\mathbb{Q})\to\text{GT}_p$ with the character $\chi: {\rm GT}_p \rightarrow \mathbb{Z}_p^*$
(see the proof of Proposition 5.3 in \cite{Dr}):

\begin{theorem}
The character $\chi: {\rm GT}_p \rightarrow \mathbb{Z}_p^*$ is surjective.
\end{theorem}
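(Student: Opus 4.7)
The plan is to follow the road map sketched right before the theorem statement: exhibit the character $\chi$ as a factor of the $p$-adic cyclotomic character through a Galois representation into $\text{GT}_p$, and then invoke the classical surjectivity of the cyclotomic character.

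First I would recall Drinfeld's construction from Section 4 of \cite{Dr}. The absolute Galois group $\text{Gal}(\bar{\mathbb{Q}}/\mathbb{Q})$ acts on the pro-$p$ fundamental group of $\mathbb{P}^1_{\bar{\mathbb{Q}}}\setminus\{0,1,\infty\}$ with a suitable tangential base point. This fundamental group is the pro-$p$ completion $(F_2)_p$. Using the compatibility of this action with the action on pro-$p$ braid groups on three and four strands and with the standard generators coming from small loops around the punctures, Drinfeld produces a canonical group morphism
\[
\rho_p : \text{Gal}(\bar{\mathbb{Q}}/\mathbb{Q}) \longrightarrow \text{GT}_p,
\]
so that for $\sigma \in \text{Gal}(\bar{\mathbb{Q}}/\mathbb{Q})$ the pair $\rho_p(\sigma) = (\lambda_\sigma, f_\sigma)$ satisfies the pentagon and hexagon relations \eqref{GT1}--\eqref{GT3} in the pro-$p$ setting.

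Next I would identify the first component $\lambda_\sigma$ of $\rho_p(\sigma)$ with the action of $\sigma$ on the generator of the tame inertia at $0$, namely the $p$-adic Tate module of roots of unity. This is precisely the statement that the composition
\[
\chi \circ \rho_p : \text{Gal}(\bar{\mathbb{Q}}/\mathbb{Q}) \longrightarrow \mathbb{Z}_p^*
\]
coincides with the $p$-adic cyclotomic character $\chi_{\rm cycl}$ sending $\sigma$ to the element $\lambda_\sigma \in \mathbb{Z}_p^*$ with $\sigma(\zeta) = \zeta^{\lambda_\sigma}$ for every $p$-power root of unity $\zeta$.

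Finally I would invoke the classical fact that $\chi_{\rm cycl}$ is surjective: the Galois group $\text{Gal}(\mathbb{Q}(\mu_{p^\infty})/\mathbb{Q})$ is canonically isomorphic to $\mathbb{Z}_p^*$ via $\chi_{\rm cycl}$, and $\chi_{\rm cycl}$ factors through this quotient. Surjectivity of $\chi_{\rm cycl} = \chi \circ \rho_p$ immediately forces surjectivity of $\chi : \text{GT}_p \to \mathbb{Z}_p^*$, which is the claim.

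The genuine content of the proof is Drinfeld's construction of $\rho_p$ and the identification of its first component with $\chi_{\rm cycl}$; this is the only non-trivial step and is carried out in the proof of Proposition 5.3 of \cite{Dr}. Given that input, the argument is a one-line diagram chase.
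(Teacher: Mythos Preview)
Your proposal is correct and follows exactly the argument the paper itself sketches immediately before the theorem statement: factor the $p$-adic cyclotomic character as $\chi\circ\rho_p$ using Drinfeld's Galois representation into ${\rm GT}_p$, then invoke surjectivity of the cyclotomic character. The paper gives no further proof beyond that sentence and the reference to Proposition~5.3 of \cite{Dr}, so your write-up is a faithful expansion of precisely what the authors intend.
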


\section{$p$-adic associators}

\subsection{Drinfeld associators}
For $\mu \in \k$, the set of Drinfeld associators ${\rm Assoc}_\mu(\k)$
is defined as the set of group-like
elements $\Phi \in \k\langle\!\langle x,y \rangle\!\rangle$ such that
\begin{equation}  \label{assoc1}
\Phi(x,y)=\Phi(y,x)^{-1},
\end{equation}
\begin{equation}  \label{assoc2}
e^{\mu x/2} \Phi(z,x) e^{\mu z/2} \Phi(y,z) e^{\mu y/2} \Phi(x,y)=1
\end{equation}
for $x+y+z=0$, and
\begin{multline}  \label{assoc3}
\Phi(t_{1,2}, t_{2,3}+t_{2,4})\,\Phi(t_{1,3}+t_{2,3}, t_{3,4})=\\
\Phi(t_{2,3}, t_{3,4})\, \Phi(t_{1,2}+t_{1,3}, t_{2,4}+t_{3,4}) \,
\Phi(t_{1,2}, t_{2,3})
\end{multline}
in the group $\exp(\t_4)$.

\begin{remark}
It has recently been shown by Furusho \cite{Furusho} that equations
\eqref{assoc1} and \eqref{assoc2} are implied by the pentagon
equation \eqref{assoc3} and by $\Phi\in 1+\k\langle\!\langle x,y \rangle\!\rangle^{\geq2}$.
The coefficient $\mu$ in \eqref{assoc2} is determined by the expansion
$\Phi(x,y)=1+\mu^2/24\, [x,y] + \dots$, where the dots stand for terms
of degree higher than two.
\end{remark}

Let
$${\mathrm{Assoc}}(\k)=\{(\mu,\Phi);\mu\in\k^*,\Phi\in\mathrm{Assoc}_\mu(\k)\}$$
be the set of all associators with $\mu\neq0$ (${\mathrm{Assoc}}_0(\k)$ coincides with $\mathrm{GRT}(\k)$).
The group ${\mathrm{GT}}(\k)$ acts on ${\mathrm{Assoc}}(\k)$ via
$$(\lambda,f)\cdot(\mu,\Phi)=(\lambda\mu,f(\mu\,\Phi(x,y)\, x\, \Phi(x,y)^{-1},\, \mu\,y)\,\Phi(x,y)).$$
This action is free and transitive.

The group $\k^*$ acts on $\mathrm{Assoc}(\k)$ by $\lambda\cdot(\mu,\Phi)=(\lambda\mu,\Phi(\lambda x,\lambda y))$. By identifying the quotient $\mathrm{Assoc}(\k)/\k^*$ with $\mathrm{Assoc}_1(\k)$,
we get an action of $\mathrm{GT}(\k)$ on $\mathrm{Assoc}_1(\k)$.
Under this action, every $\Phi\in\mathrm{Assoc}_1(\k)$ has a one-parameter stabilizer of
the form $\{ (\lambda, f_\lambda) \in {\rm GT}, \lambda \in \k^*\}$, i.e.,

\begin{equation}\label{flambdaPhi}
f_\lambda(\Phi(x,y) x \Phi(x,y)^{-1}, y)\,\Phi(x,y) = \Phi(\lambda x, \lambda y).
\end{equation}
This one-parameter subgroup  of ${\rm GT}(\k)$ is generated by an element
$(1,\psi) \in \mathfrak{gt}(\k)$ satisfying
\begin{equation}   \label{psiphi}
\Phi(x,y)^{-1} \, \frac{d}{d\lambda}  \Phi(\lambda x,\lambda y)|_{\lambda=1} =
\psi(x, \Phi(x,y)^{-1}y\,\Phi(x,y)).
\end{equation}
Equation \eqref{psiphi} gives a bijective correspondence between the elements $\psi$ of $\k\langle\!\langle x,y \rangle\!\rangle^{\geq1}$ and the elements $\Phi$ of
$1+\k\langle\!\langle x,y \rangle\!\rangle^{\geq1}$. By Proposition 5.2 in \cite{Dr},
this correspondence restricts to a bijection
between  ${\rm Assoc}_1(\k)$ and the set of elements
$(s, \psi) \in \mathfrak{gt}(\k)$ with $s=1$.

\begin{theorem}\label{thm:GT-to-assoc}
Let $(\lambda_0,f)\in \mathrm{GT}(\k)$ be such that $\lambda_0\in\k^*$ is not a root of unity. 
Then there is a unique associator $\Phi\in\mathrm{Assoc}_1(\k)$ such that 
$$(\lambda_0,f)\cdot(1,\Phi)=\lambda_0\cdot(1,\Phi),$$
i.e. such that
\begin{equation}\label{f0Phi}
f(\Phi(x,y) x \Phi(x,y)^{-1}, y)\,\Phi(x,y) = \Phi(\lambda_0 x, \lambda_0 y).
\end{equation}
\end{theorem}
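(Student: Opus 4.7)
The plan is to reduce equation \eqref{f0Phi} to a conjugation problem in $\mathrm{GT}(\k)$ and to solve it by a standard pro-unipotent induction.

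First I would fix any base associator $\Phi_0\in\mathrm{Assoc}_1(\k)$ and let $\lambda\mapsto(\lambda,f^{(0)}_\lambda)$ be its stabilizing one-parameter subgroup, so that $(\lambda_0,f^{(0)}_{\lambda_0})\cdot(1,\Phi_0)=\lambda_0\cdot(1,\Phi_0)$. Set $\mathrm{GT}_1(\k):=\ker\chi\subset\mathrm{GT}(\k)$; the free transitive action of $\mathrm{GT}(\k)$ on $\mathrm{Assoc}(\k)$ restricts to a free transitive action of $\mathrm{GT}_1(\k)$ on $\mathrm{Assoc}_1(\k)$, so any candidate solution of \eqref{f0Phi} can be written uniquely as $\Phi=g\cdot\Phi_0$ with $g\in\mathrm{GT}_1(\k)$. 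A direct computation shows that the $\mathrm{GT}(\k)$- and $\k^*$-actions on $\mathrm{Assoc}(\k)$ commute; combining this with the stabilizer property of $\Phi_0$ and freeness of the $\mathrm{GT}(\k)$-action, I would rewrite \eqref{f0Phi} as the conjugation equation
$$
g\,(\lambda_0,f^{(0)}_{\lambda_0})\,g^{-1}\;=\;(\lambda_0,f)\quad\text{in }\mathrm{GT}(\k).
$$
Thus the theorem reduces to showing that for every $(\lambda_0,f)\in\chi^{-1}(\lambda_0)$ there is a unique $g\in\mathrm{GT}_1(\k)$ carrying $(\lambda_0,f^{(0)}_{\lambda_0})$ to $(\lambda_0,f)$ by conjugation.

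To settle this conjugation problem I would exploit the pro-unipotent structure of $\mathrm{GT}_1(\k)$, whose associated graded Lie algebra is $\mathfrak{grt}(\k)$. The key observation is that the adjoint action of any element of $\chi^{-1}(\lambda_0)$ on $\mathfrak{gt}_1(\k)$ descends on associated graded to multiplication by $\lambda_0^n$ on the degree-$n$ component of $\mathfrak{grt}(\k)$, since the $\k^*$-action on $\mathfrak{grt}(\k)$ is precisely the standard grading. Because $\lambda_0$ is not a root of unity, $1-\lambda_0^n$ is invertible on $\mathfrak{grt}_n(\k)$ for every $n\geq 1$. For uniqueness: if $g_1,g_2$ are two solutions, then $g_1^{-1}g_2=\exp(\eta)$ centralizes $(\lambda_0,f)$, and the equation $\Ad(\lambda_0,f)(\eta)=\eta$ forces each homogeneous piece $\eta_n$ to vanish, hence $\eta=0$. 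For existence: I would construct $g$ by induction on the degree filtration---given a partial $g$ solving the conjugation modulo degree $n+1$, a degree-$(n+1)$ correction $\exp(\xi_{n+1})$ alters the conjugate in degree $n+1$ by $(1-\lambda_0^{n+1})\xi_{n+1}$, and $\xi_{n+1}$ is uniquely determined by inverting $1-\lambda_0^{n+1}$. Passing to the inverse limit yields the required $g\in\mathrm{GT}_1(\k)$.

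The main technical obstacle will be making the pro-unipotent/associated-graded dictionary precise---in particular, verifying that the linearization of the conjugation equation at each degree is indeed multiplication by $1-\lambda_0^n$ on $\mathfrak{grt}_n(\k)$. Once this is set up, both existence and uniqueness become routine consequences of $\lambda_0$ having infinite order in $\k^*$.
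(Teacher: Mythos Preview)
Your approach is correct and genuinely different from the paper's. The paper constructs $\Phi$ directly as the unique element of $1+\k\langle\!\langle x,y\rangle\!\rangle^{\geq1}$ solving \eqref{f0Phi} by a degree-by-degree recursion $\Phi_n=(\lambda_0^n-1)^{-1}Q_n$, and then proves $\Phi\in\mathrm{Assoc}_1(\k)$ by a polynomial-interpolation argument: it builds the family $(\lambda,f_\lambda)$ with $f_\lambda$ polynomial in $\lambda$, observes that $(\lambda_0^n,f_{\lambda_0^n})=(\lambda_0,f)^n\in\mathrm{GT}(\k)$ for all $n\in\Z$, deduces $(\lambda,f_\lambda)\in\mathrm{GT}(\k)$ for every $\lambda$, and recovers $\Phi$ as the $\epsilon\to0$ limit of $h_\epsilon=f_{1/\epsilon}(\epsilon x,\epsilon y)$, whence the associator relations follow from the ${\rm GT}$ relations.

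Your route instead \emph{assumes} an associator $\Phi_0$ exists (Drinfeld's theorem, used elsewhere in the paper), and converts \eqref{f0Phi} into a conjugacy problem in $\mathrm{GT}(\k)$, which you solve by the standard filtered/pro-unipotent machinery. This is cleaner conceptually and handles existence and uniqueness by the same mechanism, at the cost of invoking the nontrivial input that $\mathrm{Assoc}_1(\k)\neq\varnothing$; the paper's argument is self-contained in this respect. Two small corrections that do not affect the argument: the element centralizing $(\lambda_0,f)$ is $g_1g_2^{-1}$ rather than $g_1^{-1}g_2$ (it is $g_2^{-1}g_1$ that centralizes $(\lambda_0,f^{(0)}_{\lambda_0})$), and a direct check of the ${\rm GT}$ group law shows that conjugation by an element of $\chi^{-1}(\lambda_0)$ acts on $\mathfrak{grt}_n$ as multiplication by $\lambda_0^{-n}$ rather than $\lambda_0^{n}$; either way $1-\lambda_0^{\pm n}$ is invertible precisely when $\lambda_0$ is not a root of unity, so your induction goes through unchanged.
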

\begin{proof}
Since $\lambda_0$ is not a root of 1, there is a unique element $\Phi\in 1+\k\langle\!\langle x,y \rangle\!\rangle^{\geq1}$ satisfying \eqref{f0Phi}. Indeed, the degree $n$ homogeneous part of \eqref{f0Phi} is of the form
\[\Phi_n+Q_n(x,y,\Phi_1,\dots,\Phi_{n-1})=\lambda_0^n \Phi_n,\]
where $Q_n$ is a non-commutative polynomial and $\Phi_n$ is the degree $n$ homogeneous part of $\Phi$. We thus have a recurrence relation
\[\Phi_n=\frac{1}{\lambda_0^n-1}\,Q_n(x,y,\Phi_1,\dots,\Phi_{n-1}), \]
which admits a unique solution.

We need to prove that $\Phi\in\rm{Assoc}_1(\k)$. Notice that for each $\lambda \in \k$ there is a unique $f_\lambda\in\k\langle\!\langle x,y \rangle\!\rangle$
 satisfying \eqref{flambdaPhi}, i.e.\ such that $(\lambda,f_\lambda)\cdot(1,\Phi)=\lambda\cdot(1,\Phi)$. Moreover, the degree $n$ part of $f_\lambda$ is a polynomial in $\lambda$
 of degree at most $n$.
By the uniqueness property, we have $(\lambda,f_\lambda)|_{\lambda=\lambda_0}=(\lambda_0,f)$ and more generally $(\lambda,f_\lambda)|_{\lambda=\lambda_0^n}=(\lambda_0,f)^n$ for every $n\in\mathbb{Z}$. 
Since $(\lambda,f_\lambda)\in\rm{GT}(\k)$ for infinitely many values of $\lambda$ (namely for $\lambda=\lambda_0^n$), we have $(\lambda,f_\lambda)\in\rm{GT}(\k)$ for every non-zero value of $\lambda$.

Define $h_\epsilon(x,y)=f_{1/\epsilon}(\epsilon x , \epsilon y)$, its coefficients are polynomials in $\epsilon$.
By making a substitution $\lambda =1/\epsilon$ and $x \mapsto \epsilon x, y \mapsto \epsilon y$ in Equation \eqref{flambdaPhi} we get 
$$
\Phi(x,y)=h_\epsilon(\Phi(\epsilon x, \epsilon y) x \Phi(\epsilon x, \epsilon y)^{-1}, y)\Phi(\epsilon x, \epsilon y) ,
$$ 
and hence $\Phi(x,y)=h_0(x,y)$. Relations \eqref{assoc1}, \eqref{assoc2}, \eqref{assoc3} for $\Phi$ then follow from relations \eqref{GT1}, \eqref{GT2}, \eqref{GT3} for $(\lambda,f_\lambda)$.
Therefore $\Phi\in\rm{Assoc}_1(\k)$.
\end{proof}

The group ${\rm GRT}(\k)$ acts on $\mathrm{Assoc}_1(\k)$ by
$$
g\cdot \Phi  =  \Phi(g(x,y)\, x\, g(x,y)^{-1}, y)\, g(x,y).
$$
This action is free and transitive.

Following Drinfled, we introduce the set of associators
${\rm Assoc}_1^{(k)}(\k) \subset \k\langle x,y \rangle^{\leq k} \cong
\k\langle\!\langle x,y \rangle\!\rangle/\k\langle\!\langle x,y \rangle\!\rangle^{\geq k+1} $
satisfying equations \eqref{assoc1}, \eqref{assoc2} (with $\mu=1$), and \eqref{assoc3}
up to degree $k$. By Proposition 5.8 in \cite{Dr}, the natural projections
$ {\rm Assoc}_1^{(k+1)}(\k) \rightarrow {\rm Assoc}_1^{(k)}(\k)$ are surjective for all $k$.
One can also introduce Lie algebras $\mathfrak{gt}^{(k)}(\k) \subset \k\langle x,y \rangle^{\leq k}$
which consist of the pairs
$(s, \psi)$ with
$s \in \k,$ $\psi \in \mathfrak{lie}_2(\k) \cap\k\langle x,y \rangle^{\leq k}$
satisfying equations
\eqref{gt1}, \eqref{gt2} and \eqref{gt3} up to degree $k$. We will need the following
version of Proposition 5.2 in \cite{Dr}.

\begin{proposition}  \label{uptodegreek}
For all $k$, formula \eqref{psiphi} gives a one-to-one correspondence
between the elements $(1,\psi) \in \mathfrak{gt}^{(k)}(\k)$ and the elements $\Phi \in {\rm Assoc}_1^{(k)}(\k)$.
\end{proposition}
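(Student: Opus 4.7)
The plan is to reduce the truncated statement to Drinfeld's full Proposition 5.2 \cite{Dr}. Two observations are key. First, expanding \eqref{psiphi} in homogeneous degrees, the degree-$n$ component reads
\[
\psi_n = n\,\Phi_n + L_n(\Phi_1,\dots,\Phi_{n-1}),
\]
for an explicit non-commutative polynomial $L_n$, so the relation is triangular and induces a bijection $\psi\leftrightarrow\Phi$ at every truncation level. Second, the projections $\mathrm{Assoc}_1(\k)\to\mathrm{Assoc}_1^{(k)}(\k)$ are surjective (Proposition 5.8 in \cite{Dr}).

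The forward direction is immediate: given $\Phi^{(k)}\in\mathrm{Assoc}_1^{(k)}$, lift to some $\Phi\in\mathrm{Assoc}_1$ via Proposition 5.8, let $\psi\in\mathfrak{gt}(\k)$ with $s=1$ be its image under the full Proposition 5.2, and truncate to obtain $\psi^{(k)}\in\mathfrak{gt}^{(k)}$ (the gt equations hold in all degrees, hence up to degree $k$). By the triangularity of \eqref{psiphi}, $\psi^{(k)}$ depends only on $\Phi^{(k)}$, not on the chosen lift.

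For the backward direction I would argue by induction on $k$. The base case is immediate. Given $\psi^{(k)}\in\mathfrak{gt}^{(k)}$, the inductive hypothesis produces $\Phi^{(k-1)}\in\mathrm{Assoc}_1^{(k-1)}$ corresponding to $\psi^{(k-1)}$. Lift $\Phi^{(k-1)}$ to $\bar\Phi\in\mathrm{Assoc}_1$ (Proposition 5.8), and let $\bar\psi$ be the corresponding element of $\mathfrak{gt}(\k)$. The difference $\delta:=\psi_k-\bar\psi_k\in\k\langle x,y\rangle^k$ is annihilated by the degree-$k$ linearization of the gt equations at $\bar\psi^{(k-1)}$, since both $\psi^{(k)}$ and $\bar\psi^{(k)}$ belong to $\mathfrak{gt}^{(k)}$ and agree at lower truncation. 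The candidate preimage of $\psi^{(k)}$ is $\bar\Phi^{(k)}+\delta/k$, and it remains to verify that $\delta/k$ is annihilated by the degree-$k$ linearization of the assoc equations at $\Phi^{(k-1)}$.

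The main obstacle is this identification of linearized kernels. To resolve it, differentiate the full Proposition 5.2 at $\bar\Phi$: the derivative is a linear bijection between first-order deformations of $\bar\Phi$ inside $\mathrm{Assoc}_1$ and of $\bar\psi$ inside $\mathfrak{gt}(\k)$, and on perturbations vanishing in degrees $<k$, its degree-$k$ component is multiplication by $k$. Applying Proposition 5.8 to first-order deformations (over $\k[\epsilon]/\epsilon^2$), every element of the degree-$k$ linearized assoc-kernel extends to a full first-order deformation of $\bar\Phi$, which via Proposition 5.2 corresponds to a first-order deformation of $\bar\psi$ whose degree-$k$ component is $k$ times the original. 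Hence $k$ times the linearized assoc-kernel coincides with the linearized gt-kernel, which yields $\delta/k$ in the required kernel and completes the induction.
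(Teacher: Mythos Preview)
Your forward direction ($\Phi^{(k)}\mapsto\psi^{(k)}$) coincides with the paper's second paragraph. The backward direction, however, has a genuine gap at the sentence ``Hence $k$ times the linearized assoc-kernel coincides with the linearized gt-kernel.'' Your diagram chase only proves the inclusion $k\cdot(\text{assoc-kernel}_k)\subseteq\text{gt-kernel}_k$: you lift an element of the assoc-kernel to a full first-order deformation of $\bar\Phi$ (Proposition~5.8 over $\k[\epsilon]/\epsilon^2$) and push it forward via the derivative of Proposition~5.2, landing in the gt-kernel. Nothing in your argument gives the reverse inclusion. Concretely, to conclude $\delta/k\in\text{assoc-kernel}_k$ you would need to know that $\delta$ lies in the image $\pi_k\bigl(\mathfrak{gt}_1\cap\{\text{deg}\geq k\}\bigr)$, i.e.\ that $\delta$ lifts to a full element of $\mathfrak{gt}_1$ vanishing below degree $k$. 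That is precisely surjectivity of truncation on the $\mathfrak{gt}$ side, which you never establish and which does not follow from Proposition~5.8.

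The paper handles the backward direction without any induction, by invoking Drinfeld's Proposition~5.6: there is a (non-canonical) isomorphism $\tau:\mathfrak{gt}(\k)\to\k\oplus\mathfrak{grt}(\k)$. Since $\mathfrak{grt}$ is \emph{graded}, the image $1+\phi\in\k\oplus\mathfrak{grt}^{(k)}$ of $(1,\psi^{(k)})$ lifts trivially to $\k\oplus\mathfrak{grt}$; applying $\tau^{-1}$ gives a lift $(1,\hat\psi)\in\mathfrak{gt}(\k)$, and one then applies the full Proposition~5.2 and truncates. Your argument can be repaired with the same input, or alternatively by identifying both kernels directly with $\mathfrak{grt}_k$ (the gt-kernel because the $\mathfrak{gt}$ equations are linear in $\psi$ and their degree-$k$ part on a homogeneous element is the $\mathfrak{grt}$ equations; the assoc-kernel via the free transitive $\mathrm{GRT}$ action), after which $\delta\in\mathfrak{grt}_k\Rightarrow\delta/k\in\mathfrak{grt}_k$ is immediate. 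But as written, the key equality of kernels is asserted rather than proved.
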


\begin{proof}
Let $(1,\psi) \in \mathfrak{gt}^{(k)}(\k)$. By Proposition 5.6 in \cite{Dr}, the Lie algebra
$\mathfrak{gt}(\k)$ is isomorphic (in a non-canonical way) to the semi-direct sum
$\k \oplus \mathfrak{grt}(\k)$. We choose such an isomorphism $\tau:\mathfrak{gt}(\k) \rightarrow
\k \oplus \mathfrak{grt}(\k)$, and apply it to $(1, \psi)$ to obtain an element
$1 + \phi \in \k \oplus \mathfrak{grt}^{(k)}(\k)$. Since $\mathfrak{grt}(\k)$ is graded,
the element $1 + \phi$ lifts (in many ways) to an element $1 + \hat{\phi} \in
\k \oplus \mathfrak{grt(\k)}$. By applying the inverse of $\tau$, we arrive at
$(1, \hat{\psi}) \in \mathfrak{gt}(\k)$ which is a lift of $(1, \psi)$. Consider the
associator $\hat{\Phi}$ corresponding to the element $(1, \hat{\psi})$, and let $\Phi$
be its degree $k$ truncation. Equation \eqref{psiphi} is verified for
$\hat{\psi}$ and $\hat{\Phi}$. By truncating it at degree $k$,
we can replace $\hat{\psi}$ by $\psi$ and $\hat{\Phi}$ by $\Phi$.

Similarly, let $\Phi \in {\rm Assoc}_1^{(k)}(\k)$, and let $\hat{\Phi} \in {\rm Assoc}_1(\k)$
be an associator lifting $\Phi$. We define $\hat{\psi}$ by equation \eqref{psiphi} to obtain
$(1, \hat{\psi}) \in \mathfrak{gt}(\k)$, and let $\psi$ be the degree $k$ truncation of $\hat{\psi}$.
Again, by truncating equation \eqref{psiphi} at degree $k$,
we can replace $\hat{\psi}$ by $\psi$ and $\hat{\Phi}$ by $\Phi$.
\end{proof}

\subsection{Denominator estimates of associators over $\mathbb{Q}_p$}
In this section we prove existence of $p$-adic associators with certain denominator bounds. The bounds are
given by a function $a_p$ defined as
\begin{equation}
\qquad a_p(n)=\Big[\frac{n}{p-1}\Big] + v_p\Big(\,\Big[\frac{n}{p-1}\Big]!\,\Big)\, ,
\end{equation}
where $v_p$ is the $p$-adic valuation.

\begin{proposition}  \label{prop:a}
The function $a_p(n)$ has the following properties
\begin{equation} \label{a1}
a_p(n) \geq a_p(k)+a_p(l) \text{ if }n\geq k+l ,
\end{equation}
\begin{equation} \label{a2}
a_p(m+n) \geq a_p(m+1) + v_p(n!) ,
\end{equation}
%
\end{proposition}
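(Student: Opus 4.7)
The plan is to rewrite $a_p(n)$ in a cleaner form using Legendre's identity $v_p((pk)!) = k + v_p(k!)$, which yields
\[
a_p(n) = v_p\bigl((p\cdot[n/q])!\bigr), \qquad q := p-1.
\]
With this reformulation, both inequalities reduce to standard facts: the integrality of binomial coefficients (giving $v_p((X+Y)!)\geq v_p(X!)+v_p(Y!)$) and monotonicity of $N\mapsto v_p(N!)$.

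For \eqref{a1}, I would combine the above facts with subadditivity of the floor, $[k/q]+[l/q]\leq [(k+l)/q]\leq [n/q]$ (valid for $n\geq k+l$). Setting $X=p[k/q]$ and $Y=p[l/q]$ and noting $X+Y\leq p[n/q]$ immediately produces the chain
\[
a_p(k)+a_p(l) = v_p(X!)+v_p(Y!) \leq v_p((X+Y)!) \leq v_p\bigl((p[n/q])!\bigr) = a_p(n).
\]

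For \eqref{a2}, set $A=[(m+n)/q]$ and $B=[(m+1)/q]$ and write $m+1=Bq+r$ with $0\leq r<q$. Then $A-B=T:=[(r+n-1)/q]$, which in particular yields $n\leq q(T+1)$. The integrality of $\binom{A}{B}$ gives $v_p(A!)-v_p(B!)\geq v_p(T!)$, so
\[
a_p(m+n)-a_p(m+1) \geq T+v_p(T!) = v_p((pT)!),
\]
and it suffices to prove $v_p((pT)!)\geq v_p(n!)$. This is the main obstacle, since $pT$ need not dominate $n$: one only has $n\leq q(T+1)$, and naively monotonicity of $v_p(N!)$ is insufficient.

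I would resolve this by splitting on the size of $T$. If $T\geq q$, then $pT\geq q(T+1)\geq n$, and monotonicity of $v_p(N!)$ closes the argument. If $0\leq T\leq q$, then $T<p$ and $q(T+1)\leq q(q+1)<p^2$, so for both $(pT)!$ and $(q(T+1))!$ Legendre's formula collapses to the $i=1$ term and one computes directly $v_p((pT)!)=T=v_p((q(T+1))!)$. Since $n\leq q(T+1)$, monotonicity then gives $v_p(n!)\leq T$, as required, which completes the proof.
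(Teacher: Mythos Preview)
Your argument is correct. The identity $a_p(n)=v_p\bigl((p\,[n/(p-1)])!\bigr)$ is a pleasant repackaging, and your proof of \eqref{a1} is then the paper's proof (integrality of binomial coefficients plus superadditivity of the floor) rewritten in this notation.

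For \eqref{a2} the two approaches diverge. The paper uses only the single estimate $v_p(n!)\le\bigl[(n-1)/(p-1)\bigr]$ (immediate from Legendre's formula, since the $p$-adic digit sum of $n\ge 1$ is at least $1$); together with $[(m+1)/q]+[(n-1)/q]\le[(m+n)/q]$ and monotonicity of $K\mapsto v_p(K!)$ this gives $a_p(m+n)-a_p(m+1)\ge[(n-1)/q]\ge v_p(n!)$ in one line. Your route---bounding $a_p(m+n)-a_p(m+1)$ below by $v_p((pT)!)$ and then comparing $v_p((pT)!)$ with $v_p(n!)$ via a case split on whether $T\ge q$---is also valid but more elaborate; the ``small $T$'' computation is in effect re-deriving the paper's key estimate at the particular values $n=q(T+1)$. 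The paper's argument is shorter; your reformulation of $a_p$, on the other hand, makes the ring property of $\sum_n p^{-a_p(n)}\Z_p\langle x,y\rangle^n$ especially transparent.
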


\begin{proof}
Inequality \eqref{a1} follows from $v_p\big((k+l)!\big)\geq v_p(k!)+v_p(l!)$ and inequality \eqref{a2} from $v_p(n!)\leq[(n-1)/(p-1)]$.
\end{proof}


Property \eqref{a1} ensures that
$ \sum_{n=0}^\infty p^{-a_p(n)} \Z_p\langle x,y \rangle^n$
is a subring of $\mathbb{Q}_p\langle\!\langle x,y \rangle\!\rangle$, and property \eqref{a2} implies
$$
\sum_{n=0}^\infty p^{-a_p(n)} \Z_p\langle x,y \rangle^n =
\sum_{n=0}^\infty p^{-a_p(n)} \Z_p\langle \hat{x},\hat{y} \rangle^n,
$$
for $\hat{x}=e^x-1$, and $\hat{y}=e^y-1$.

For any prime $p>2$, we can choose an element $\lambda\in\mathbb{Z}_p^*$ generating a dense subgroup of $\mathbb{Z}_p^*$, i.e., such that $\lambda\mod p$\  is a generator of $\mathbb{F}_p^*$ and $\lambda^{p-1}\in 1+p\mathbb{Z}_p^*$.

\begin{theorem}  \label{Phiestimate}
Let $p$ be a prime.
For $p>2$, let $(\lambda, f) \in {\rm GT}_p$ be such that
$\lambda$ generates a dense sugroup of $\mathbb{Z}_p^*$, and, for $p=2$, let $(\lambda, f) \in {\rm GT}_2$
be such that $\lambda \in 1+4+8\Z_2$. Then there is a unique
Drinfeld associator $\Phi \in {\rm Assoc}_1(\mathbb{Q}_p)$ such that
\begin{equation}  \label{fPhi}
f(\Phi(x,y)\, x\, \Phi(x,y)^{-1}, y)\, \Phi(x,y) = \Phi(\lambda x, \lambda y)\,.
\end{equation}
The $p$-adic associator $\Phi$ satisfies the  denominator estimate
\begin{equation}  \label{aux}
\Phi \in \sum_{n=0}^\infty p^{-a_p(n)} \Z_p\langle x,y \rangle^n.
\end{equation}
\end{theorem}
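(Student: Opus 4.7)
The plan is to get existence and uniqueness from Theorem~\ref{thm:GT-to-assoc} (in both cases $\lambda$ is not a root of unity), and then to prove the denominator bound by induction on the total degree $n$. Setting $F:=f(\Phi\hat{x}\Phi^{-1},\hat{y})$ and extracting degree $n$ components from \eqref{fPhi} yields the recurrence
\[
(\lambda^n-1)\,\Phi_n=\sum_{k\geq 2}F_k\,\Phi_{n-k},
\]
where the sum starts at $k=2$ because examining the degree-$1$ part of the pentagon \eqref{GT3} forces $f\in 1+\mathbb{Z}_p\langle\!\langle\hat{x},\hat{y}\rangle\!\rangle^{\geq 2}$, so $F_1=0$.

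Assuming inductively that $\Phi_j\in p^{-a_p(j)}\mathbb{Z}_p\langle x,y\rangle^j$ for $j<n$, I use that $R:=\sum_n p^{-a_p(n)}\mathbb{Z}_p\langle x,y\rangle^n$ is a ring by \eqref{a1} and equals $\sum_n p^{-a_p(n)}\mathbb{Z}_p\langle\hat{x},\hat{y}\rangle^n$ by \eqref{a2}, so $f\in R$ and $\Phi\hat{x}\Phi^{-1}\in R$ up to degree $n$. The central refined estimate I would establish is that each ``atomic'' homogeneous factor of degree $d\geq 1$ appearing in the expansion of $F_k$---either a piece $\hat{y}_d=y^d/d!$ of $\hat{y}$, or a summand $\Phi_a\hat{x}_r\Phi^{-1}_b$ of $[\Phi\hat{x}\Phi^{-1}]_d$ with $a+r+b=d$ and $r\geq 1$---already has denominator at most $p^{a_p(d-1)}$. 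For the $\hat{y}$-case this reduces to $v_p(d!)\leq[(d-1)/(p-1)]\leq a_p(d-1)$; for the $\Phi\hat{x}\Phi^{-1}$-case it follows from \eqref{a2} applied with $m=a+b-1$, $n=r$, combined with \eqref{a1}.

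Each word of length $L\geq 2$ contributing to $F_k$ therefore gives an element of $p^{-a_p(k-L)}\mathbb{Z}_p\langle x,y\rangle^k\subseteq p^{-a_p(k-2)}\mathbb{Z}_p\langle x,y\rangle^k$ by \eqref{a1}, and summing yields $F_k\in p^{-a_p(k-2)}\mathbb{Z}_p\langle x,y\rangle^k$; one more application of \eqref{a1} bounds $\sum_{k\geq 2}F_k\Phi_{n-k}\in p^{-a_p(n-2)}\mathbb{Z}_p\langle x,y\rangle^n$. A direct valuation computation shows $v_p(\lambda^n-1)\leq a_p(n)-a_p(n-2)$ in every case: for $p>2$, $v_p(\lambda^n-1)$ is $0$ or $1+v_p(n/(p-1))$, matching the jump of $a_p$ at a multiple of $p-1$; for $p=2$, $v_2(\lambda^n-1)=2+v_2(n)\leq 2+v_2(n)+v_2(n-1)=a_2(n)-a_2(n-2)$. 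Dividing the recurrence by $\lambda^n-1$ then gives $\Phi_n\in p^{-a_p(n)}\mathbb{Z}_p\langle x,y\rangle^n$, closing the induction.

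The main obstacle is securing two full degrees of slack in the bound on the right-hand side. Without the $-1$ shift in the atomic bound, or without the length estimate $L\geq 2$---the latter being precisely what the degree-$1$ pentagon provides---one would be left with only $v_p(\lambda^n-1)\leq a_p(n)-a_p(n-1)$, an inequality that holds for $p>2$ but fails by exactly $1$ at $p=2$, since $v_2(\lambda^n-1)=2+v_2(n)$ whereas $a_2(n)-a_2(n-1)=1+v_2(n)$. The combined two-degree margin is what allows the uniform treatment of all primes.
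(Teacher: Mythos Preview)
Your argument is correct, but it is organized somewhat differently from the paper's proof. The paper rewrites the \emph{entire} equation \eqref{fPhi} in the variables $\hat{x},\hat{y}$: the right-hand side becomes $\hat\Phi\bigl((1+\hat{x})^{\lambda}-1,(1+\hat{y})^{\lambda}-1\bigr)$, whose coefficients are still $\mathbb{Z}_p$-integral since $\lambda\in\mathbb{Z}_p$. Then the inhomogeneous term $F_n$ is a priori a $\mathbb{Z}_p$-polynomial in $\hat{x},\hat{y},\hat\Phi_1,\dots,\hat\Phi_{n-1}$, and the property $F_n(0,0,\hat\Phi_1,\dots,\hat\Phi_{n-1})=0$ immediately gives $F_n\in p^{-a_p(n-1)}\mathbb{Z}_p\langle\hat{x},\hat{y}\rangle^n$ via \eqref{a1}; for $p>2$ this single degree of slack exactly matches the jump $a_p(n)-a_p(n-1)=1+v_p(n/(p-1))$ when $(p-1)\mid n$. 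Only for $p=2$ does the paper invoke the vanishing of $f_1$ to push down to $a_2(n-2)$.

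You instead keep the $x,y$-grading on the right-hand side (so it is exactly $\lambda^n\Phi_n$) and expand the left-hand side through the $\mathbb{Z}_p\langle\!\langle\hat{x},\hat{y}\rangle\!\rangle$-presentation of $f$. This forces you to control the denominators introduced by $\hat{x}=e^x-1$ and $\hat{y}=e^y-1$ by hand, which is what your ``atomic'' estimate $[\Phi\hat{x}\Phi^{-1}]_d,\,\hat{y}_d\in p^{-a_p(d-1)}\mathbb{Z}_p\langle x,y\rangle^d$ accomplishes (your use of \eqref{a2} with $m=a+b-1$, $n=r$ is exactly what is needed). The payoff is a single argument valid for all $p$: the vanishing of $f_1$ gives $L\geq2$ uniformly, hence two degrees of slack $a_p(n-2)$, and your inequality $v_p(\lambda^n-1)\leq a_p(n)-a_p(n-2)$ indeed holds in both cases (with equality when $(p-1)\mid n$ for odd $p$, and with margin $v_2(n-1)$ for $p=2$). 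The paper's route is lighter for $p>2$ because no atomic analysis is needed, while yours is more uniform; both are valid and yield the same bound.
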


\begin{proof}
Existence and uniqueness of an associator $\Phi \in {\rm Assoc}_1(\mathbb{Q}_p)$ satisfying Equation \eqref{fPhi} follow from Theorem \ref{thm:GT-to-assoc}.

%

Let $\hat{\Phi} \in \mathbb{Q}_p\langle\!\langle \hat{x},\hat{y} \rangle\!\rangle$
be the formal power series in $\hat{x}$ and $\hat{y}$, which coincides with
$\Phi \in \mathbb{Q}_p\langle\!\langle x,y \rangle\!\rangle$ under the substitution
$\hat{x}=e^x-1,\ \hat{y}=e^y-1$. We will prove that
$
\hat{\Phi} \in \sum_{n=0}^\infty p^{-a_p(n)} \Z_p\langle
\hat{x},\hat{y} \rangle^n .
$
 By induction, we assume
that $\hat{\Phi}$ has this property up to degree $n-1$. Then the
degree $n$ part of equation \eqref{fPhi} reads
$$
\lambda^n \hat{\Phi}_n(\hat{x},\hat{y}) = \hat{\Phi}_n(\hat{x},\hat{y})
+ F_n(\hat{x}, \hat{y}, \hat{\Phi}_1, \dots, \hat{\Phi}_{n-1}),
$$
where $F_n(\hat{x}, \hat{y}, \hat{\Phi}_1, \dots, \hat{\Phi}_{n-1})$
is a noncommutative polynomial
in $\hat{x}$, $\hat{y}$ and $\hat{\Phi}_k(\hat{x}, \hat{y})$
for $k < n$ with coefficients in $\Z_p$.
Solving for $\hat{\Phi}_n$, we obtain
$$
\hat{\Phi}_n = \frac{1}{\lambda^n -1} \,
F_n(\hat{x}, \hat{y}, \hat{\Phi}_1, \dots, \hat{\Phi}_{n-1}).
$$

For $p>2$, we note that $(\lambda^n-1)^{-1}\in\mathbb{Z}_p$ if $(p-1)\nmid n$ and that $(\lambda^{(p-1)k}-1)^{-1} \in (kp)^{-1}\Z_p$. Since $F_n$ satisfies $F_n(0,0,\hat{\Phi}_1, \dots, \hat{\Phi}_{n-1})=0$, we use \eqref{a1} to obtain
\[F_n(\hat{x}, \hat{y}, \hat{\Phi}_1, \dots, \hat{\Phi}_{n-1}) \in p^{-a_p(n-1)} \Z_p\langle \hat{x}, \hat{y} \rangle^n\]
and thus
\[\hat{\Phi}_n\in
 \begin{cases}
   p^{-a_p(n-1)} \Z_p \langle \hat{x}, \hat{y} \rangle^n &\text{if }(p-1)\nmid n\\
   p^{-a_p(n-1)-1-v_p(n/(p-1))} \Z_p \langle \hat{x}, \hat{y} \rangle^n &\text{if }(p-1)\mid n
 \end{cases}
 \Bigg\}= p^{-a_p(n)} \Z_p \langle \hat{x}, \hat{y} \rangle^n\,.
\]

For $p=2$, we have $(\lambda^n -1)^{-1} \in (4n)^{-1}(1+2\Z_2)$. In this case, the induction argument gives
$$
\hat{\Phi}_n \in n^{-1}  2^{-a_2(n-2)-2} \Z_2 \langle \hat{x}, \hat{y} \rangle^n .
$$
Here we used the fact that, for $(\lambda,f)\in {\rm GT}$, the degree
one component of $f$ always vanishes.
Since
$$a_2(n)  \geq v_2(n)+a_2(n-2)+2\, ,$$ we have
$$\hat{\Phi}_n \in 2^{-a_2(n)} \Z_2 \langle \hat{x}, \hat{y} \rangle^n,$$ as required.
\end{proof}

\begin{remark}
A $p$-adic associator
 $\Phi$ satisfying condition \eqref{aux} is convergent in the $p$\nobreakdash-adic norm as a power series in $x$ and $y$ if $v_p(x),v_p(y)>p/(p-1)^2$; under this condition we have $v_p(\Phi-1)>1/(p-1)$. This follows from the inequality
\[a_p(n)\leq\frac{p}{(p-1)^2}\,n-\frac{1}{p-1}\,.\]
\end{remark}

\begin{remark}
Let $(\lambda, f) \in {\rm GT}_p$ be as in Theorem \ref{Phiestimate},
and let $(0,g)$ be the unique element of the Grothendieck--Teichm\"uller semi-group
defined by the property $(0,g)(\lambda, f)=(0,g)$.
Again, one can show that
$g \in \sum_{n=0}^{\infty} p^{-a_p(n)} \mathbb{Z}_p\langle x,y \rangle^n$.
\end{remark}

%
%

\begin{remark}
In \cite{Furusho_p}, Furusho defined a $p$-adic analogue $\Phi_\textit{KZ}^p$ of the KZ associator,
which turns out to be an element of ${\rm GRT}(\mathbb{C}_p)$ rather than of
${\rm Assoc}_1(\mathbb{Q}_p)$. Hence, one cannot directly apply the considerations of this
paper to this element.
\end{remark}

\subsection{Kashiwara-Vergne factorization of the Jacobson element}
It is natural to ask whether the denominator estimate given by Theorem
\ref{Phiestimate} is optimal. Despite the fact that we are not able to answer
this question in full generality, we will show in this section that
the simple pole (i.e., $p^{-1}$) must appear in degree $p-1$
as suggested by our estimate.

Let $\Phi\in\text{Assoc}_1(\mathbb{Q}_p)$ be such that
$$
\Phi \in 1+\Z_p\langle\!\langle x,y \rangle\!\rangle^{\geq 1} + p^{-1}\Z_p\langle x,y \rangle^{p-1}
+ \mathbb{Q}_p\langle\!\langle x,y \rangle\!\rangle^{\geq p} .
$$
For instance, the $\Phi$'s constructed in Theorem
\ref{Phiestimate} are all of this type.
Let $(1,\psi) \in \mathfrak{gt}(\mathbb{Q}_p)$ be given by relation \eqref{psiphi}. It then satisfies
$$
\psi \in \Z_p\langle\!\langle x,y \rangle\!\rangle^{\geq 1} + p^{-1}\Z_p\langle x,y \rangle^{p-1}
+ \mathbb{Q}_p\langle\!\langle x,y \rangle\!\rangle^{\geq p} .
$$
Denote by $\sigma_p \in \mathfrak{lie}_2(\mathbb{F}_p)$ the modulo $p$ reduction
of the degree $p-1$ part of $p\,\psi$. Recall that
in the free Lie algebra $\mathfrak{lie}_2(\mathbb{F}_p)$ there is a
canonical (Jacobson) element given by the formula $J_p(x,y)=(x+y)^p-x^p-y^p$.

\begin{proposition}  \label{Jacob}
The element $\sigma_p$ satisfies equations \eqref{gt1}, \eqref{grt2},
and \eqref{grt3} and the property
\begin{equation} \label{Jacobson}
[x, \sigma_p(-x-y,x)] + [y, \sigma_p(-x-y,y)]=J_p(x,y),
\end{equation}
\end{proposition}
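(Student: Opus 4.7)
The plan is to reduce each of the $\mathfrak{gt}$ equations satisfied by $\psi$ modulo $p$, after multiplying by $p$ and extracting an appropriate homogeneous degree. The essential $p$-adic input is that $\ch(x,y)$ is $p$-integral in all degrees $\leq p-1$, and that the Jacobson element appears as its residue in degree $p$: $p\cdot\ch(x,y)_p \equiv J_p(x,y) \pmod p$ (as recalled in the introduction).

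Antisymmetry \eqref{gt1} for $\sigma_p$ is immediate from the same equation for $\psi$. For \eqref{grt2}, I would multiply \eqref{gt2} by $p$ and extract the degree-$(p-1)$ component modulo $p$. The correction term $\frac{p}{2}(x+y+z) = -\frac{p}{2}(\ch(x,y)-x-y)$ vanishes mod $p$ in degree $p-1$ since $\ch$ is $p$-integral there. Among the $\psi$ contributions, only $\psi_{p-1}$ survives: $p\psi_k\equiv 0 \pmod p$ for $k<p-1$, while $\psi_k$ for $k>p-1$ already has degree too large; and in the substitution $z=-\ch(x,y)$, only the linear part $-(x+y)$ contributes at degree $p-1$ because higher terms of $\ch$ overshoot the total degree. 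The outcome is precisely \eqref{grt2} for $\sigma_p$. The pentagon \eqref{grt3} is proved the same way, with $\ch(x_{i,j},x_{i,k})$ replaced by $t_{i,j}+t_{i,k}$ under the passage to the associated graded $\mathfrak{t}_4$ of $\mathfrak{pb}_4$.

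The Jacobson identity is the main step. I would analyze \eqref{gt2} at degree $p$, multiplied by $p$ and reduced mod $p$. The right-hand side $\frac{p}{2}(x+y+z)|_{\deg p}$ contributes $-\tfrac{1}{2}J_p(x,y)$ via the Jacobson residue. A new source of degree-$p$ contributions appears on the left: $\psi_{p-1}$ acting on $(\cdot, z)$ or $(z,\cdot)$ with exactly one $z$-slot substituted by $\ch_2 = \tfrac{1}{2}[x,y]$ and the rest by $-(x+y)$, producing the derivatives $D_v\sigma_p(y,-x-y;[x,y])$ and $D_u\sigma_p(-x-y,x;[x,y])$. The bridge to the claimed identity is the chain rule for Lie derivations: since $[x,x]=0$ and $[x,-x-y]=-[x,y]$, applying $\mathrm{ad}_x$ to $\sigma_p(-x-y,x)$ yields $[x,\sigma_p(-x-y,x)] = -D_u\sigma_p(-x-y,x;[x,y])$, and applying $\mathrm{ad}_y$ to $\sigma_p(-x-y,y)$, together with the antisymmetry of $\sigma_p$, yields $[y,\sigma_p(-x-y,y)] = -D_v\sigma_p(y,-x-y;[x,y])$. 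These identifications convert the derivative terms into $-\tfrac{1}{2}\bigl([x,\sigma_p(-x-y,x)] + [y,\sigma_p(-x-y,y)]\bigr)$.

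The principal obstacle is the additional contribution $\beta_p(x,y)+\beta_p(y,-x-y)+\beta_p(-x-y,x)$ with $\beta_p = p\psi_p \bmod p$, which is not forced to vanish by antisymmetry alone and depends on the a priori uncontrolled piece $\psi_p$. To eliminate it I would repeat the calculation on the associator side, starting from $e^{x/2}\Phi(z,x)e^{z/2}\Phi(y,z)e^{y/2}\Phi(x,y)=1$ with $z=-x-y$. The exponentials alone contribute $-(x^p+y^p+z^p)/2^p \equiv J_p/2 \pmod p$ in degree $p$ (by Wilson's theorem $(p-1)!\equiv -1$ and Fermat's little theorem $2^{p-1}\equiv 1$), and the $\Phi_{p-1}$-terms sandwiched between single factors $a/2$ from the three exponentials produce exactly $-\tfrac{1}{2}\bigl([x,\sigma_p(-x-y,x)]+[y,\sigma_p(-x-y,y)]\bigr)$, using that the degree-$1$ part of $e^{x/2}e^{z/2}e^{y/2}$ vanishes. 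The residual piece coming from $\Phi_p$ is a cyclic sum of a grt-type element which can be cancelled by adjusting $\Phi$ in its $\mathrm{GRT}$-orbit without altering $\sigma_p$, yielding the Jacobson identity.
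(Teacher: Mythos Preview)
Your treatment of \eqref{gt1}, \eqref{grt2}, and \eqref{grt3} is fine and matches the paper. The Jacobson identity, however, has a real gap.

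The problem is exactly the one you flag: the degree-$p$ reduction of \eqref{gt2} contains the cyclic sum $\psi_p(x,y)+\psi_p(y,-x-y)+\psi_p(-x-y,x)$, and under the stated hypotheses $\psi_p\in\mathbb{Q}_p\langle x,y\rangle^p$ is completely uncontrolled, so ``multiply by $p$ and reduce mod $p$'' is not even well-defined on this term. Your proposed repair via the hexagon and a $\mathrm{GRT}$-adjustment does not work: modifying $\Phi_p$ by an element $g_p\in\mathfrak{grt}_p$ changes the cyclic sum $\Phi_p(z,x)+\Phi_p(y,z)+\Phi_p(x,y)$ by $g_p(z,x)+g_p(y,z)+g_p(x,y)=0$ (this is precisely \eqref{grt2}), so the troublesome piece is invariant under the orbit. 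Nor does passing to the associator side improve integrality, since $\Phi_p$ is just as uncontrolled as $\psi_p$.

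The paper avoids this entirely by a symmetrization trick you are missing. One adds \eqref{gt2} for the triple $(x,y,z)$ with $z=-\ch(x,y)$ to \eqref{gt2} for the triple $(y,x,\tilde z)$ with $\tilde z=-\ch(y,x)$. The terms $\psi(x,y)+\psi(y,x)$ cancel by antisymmetry, and the identities $\tilde z=e^{-\ad_x}z=e^{\ad_y}z$ turn the remaining $\psi$-terms into
\[
(1-e^{-\ad_x})\,\psi(z,x)+(e^{\ad_y}-1)\,\psi(z,y)+x+y+\tfrac{1}{2}(z+\tilde z)=0.
\]
Both operators $(1-e^{-\ad_x})$ and $(e^{\ad_y}-1)$ have no degree-zero part, so in the degree-$p$ component only $\psi_{\leq p-1}$ contributes; $\psi_p$ never enters. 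Taking residues then gives $[x,\sigma_p(-x-y,x)]+[y,\sigma_p(-x-y,y)]=J_p(x,y)$ directly, with the $J_p$ arising from the residue of $\tfrac{1}{2}(z+\tilde z)$ in degree $p$.
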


\begin{proof}
By taking the degree $p-1$ parts of equations \eqref{gt1} and \eqref{gt2},
and extracting the residues (i.e., the coefficients in front of $p^{-1}$),
we obtain  the following equations for $\sigma_p$:
$$
\sigma_p(x,y)=-\sigma_p(y,x) \ , \quad
\sigma_p(x,y)+\sigma_p(y,z)+\sigma_p(z,x)=0
$$
for $x+y+z=0$. Here we used the fact that the degree $p-1$ part of $\ch(x,y)$
has coefficients in $\Z_p$. By applying the same procedure to equation
\eqref{gt3}, we obtain
\begin{multline*}
\sigma_p(x_{1,2}, x_{2,3}+x_{2,4})+\sigma_p(x_{1,3}+x_{2,3},x_{3,4}) = \\
\sigma_p(x_{2,3}, x_{3,4})+\sigma_p(x_{1,2}+x_{1,3},x_{2,4}+x_{3,4})
+\sigma_p(x_{1,2},x_{2,3}),
\end{multline*}
where relations of the Lie algebra $\mathfrak{pb}_4(\mathbb{Q}_p)$ are
replaced by their linearized (graded) version in the Lie algebra
$\mathfrak{t}_4(\mathbb{F}_p)$.

For equation \eqref{Jacobson}, we add up two copies of equation \eqref{gt2},
one for the triple $x$, $y$, $z=-ch(x,y)$ and one for the tripe $y$, $x$, $\tilde{z}=-\ch(y,x)$,
to obtain
$$
(1- \exp(-\ad_x))\psi(z,x) + (\exp(\ad_y) -1 )\psi(z,y) + x+y + \frac{1}{2}(z+\tilde{z})=0.
$$
We consider the residue of the degree $p$ component of this equation, and use the fact
that the residue of the degree $p$ part of $\ch(x,y)$ is
exactly $J_p(x,y)$. This gives \eqref{Jacobson}, as required.
\end{proof}

 Proposition \ref{Jacob} shows that $\sigma_p(x,y)$ does not vanish,
and $\psi$ has to have a simple pole in degree $p-1$. For $\Phi_{p-1}$ the degree $p-1$ homogeneous component of $\Phi$, relation \eqref{psiphi} implies $\sigma_p=-p\,\Phi_{p-1}\text{ mod }p$. Therefore, $\Phi$ has a simple pole in degree $p-1$.

Every element $a \in \k\langle x,y \rangle$ admits a unique decomposition of
the form $a=a_0 + a_1 x + a_2 y$, where $a_0 \in \k$ and $a_1, a_2 \in \k\langle x,y \rangle $.
We will denote $a_1 = \partial_x a, a_2=\partial_y a$. Let $\mathfrak{tr}_2(\k)$ be the quotient
of $\k\langle x,y \rangle$ by the subspace spanned by commutators. The space $\mathfrak{tr}_2(\k)$
is spanned by cyclic words in letters $x$ and $y$. We will denote by
${\rm tr}: \k\langle x,y \rangle \rightarrow \mathfrak{tr}_2(\k)$ the canonical projection.

\begin{proposition}   \label{tr}
Let $a(x,y)=\sigma_p(-x-y,x)$ and $b(x,y)=\sigma_p(-x-y,y)$. Then,
\begin{equation}  \label{KV}
{\rm tr}( x (\partial_x a) + y (\partial_y b)) = \frac{1}{2} {\rm tr}  \left( (x+y)^{p-1}
- x^{p-1} - y^{p-1}\right) .
\end{equation}
\end{proposition}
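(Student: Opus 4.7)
The plan is to identify \eqref{KV} as the reduction modulo $p$, in degree $p-1$, of the Kashiwara--Vergne divergence identity applied to the KV solutions $(A,B)$ associated with $\Phi$. The Alekseev--Torossian construction attaches to $\Phi$ (equivalently, to $(1,\psi) \in \mathfrak{gt}(\mathbb{Q}_p)$) a pair $(A,B) \in \lie_2(\mathbb{Q}_p)^{\oplus 2}$ solving both the linearized KV equation
\[
x + y - \ch(y, x) = (1 - e^{-\ad_x})A + (e^{\ad_y} - 1)B,
\]
and the divergence identity
\[
\tr(x\,\partial_x A + y\,\partial_y B) = \tr\bigl(j(\ch(x, y)) - j(x) - j(y)\bigr),
\]
where $j(u) = \tfrac{1}{2}\log(\sinh(u/2)/(u/2)) = \tfrac{1}{2}\sum_{n \geq 1} \frac{B_{2n}}{2n(2n)!} u^{2n}$.

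First, the denominator hypothesis on $\Phi$ (hence on $\psi$) forces $A, B$ to be $p$-integral through degree $p - 2$ and to have simple poles in degree $p - 1$. Multiplying the KV equation by $p$ and extracting the degree-$p$ component modulo $p$ gives $[x, -pA_{p-1}] + [y, -pB_{p-1}] \equiv J_p \pmod p$, since the degree-$p$ residue of $\ch(y, x)$ modulo $p$ is $J_p$. Comparing with Proposition~\ref{Jacob} yields the identifications $a \equiv -pA_{p-1}$ and $b \equiv -pB_{p-1}$ modulo $p$, up to the gauge $(a, b) \mapsto (a + [x, c], b + [y, c])$, under which both sides of \eqref{KV} are invariant.

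Next, I multiply the divergence identity by $p$ and extract the degree-$(p-1)$ component modulo $p$. The left-hand side becomes $-\tr(x\,\partial_x a + y\,\partial_y b)$. On the right, only the term $2n = p - 1$ in the expansion of $j$ survives modulo $p$, since for $2n < p - 1$ the coefficient $\frac{B_{2n}}{2n(2n)!}$ is $p$-integral (by Von Staudt--Clausen and $2n < p-1$). The surviving coefficient is
\[
\frac{p B_{p-1}}{2(p-1)(p-1)!} \equiv \frac{-1}{2 \cdot (-1) \cdot (-1)} = -\tfrac{1}{2} \pmod p,
\]
by Von Staudt--Clausen ($pB_{p-1} \equiv -1 \pmod p$), Wilson's theorem ($(p-1)! \equiv -1 \pmod p$), and $p - 1 \equiv -1 \pmod p$. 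The degree-$(p-1)$ component of $\ch(x, y)^{p-1}$ coincides with $(x + y)^{p-1}$, since any higher-degree correction to $\ch(x, y)$ would raise the total degree above $p - 1$. Assembling, the right-hand side reduces to $-\tfrac{1}{2}\tr((x+y)^{p-1} - x^{p-1} - y^{p-1})$, and \eqref{KV} follows by equating the two sides.

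The main obstacle is confirming the Kashiwara--Vergne divergence identity in precisely this form, with the normalization of $j$ and sign convention compatible with the chosen KV equation. Once that identity is in hand, the reduction modulo $p$ is mechanical: the constant $\tfrac{1}{2}$ in \eqref{KV} emerges from the interplay of the $\tfrac{1}{2}$ in $j$ with the number-theoretic identities of Von Staudt--Clausen and Wilson.
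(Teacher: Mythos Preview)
Your route is genuinely different from the paper's and, in outline, attractive: rather than invoking the structural result from \cite{AT} that forces $\tr(x\partial_x a+y\partial_y b)$ into the shape $\tr(f(x+y)-f(x)-f(y))$ and then pinning down $f$ by matching the $x^{p-1}y$ and $yx^{p-2}y$ coefficients in the Jacobson identity (which is what the paper does), you try to read off the constant $\tfrac12$ directly from the Bernoulli expansion of the Duflo function via Von Staudt--Clausen and Wilson. That is a nice idea.

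The gap is in your identification step. You compare $(a,b)=(\sigma_p(-x-y,x),\sigma_p(-x-y,y))$ with the residues $(-pA_{p-1},-pB_{p-1})$ only through the Jacobson equation $[x,\cdot]+[y,\cdot]=J_p$, and then appeal to a ``gauge'' $(a,b)\mapsto(a+[x,c],b+[y,c])$. Two things go wrong. First, this transformation does \emph{not} preserve the Jacobson equation: it shifts the left side by $\ad_x^2 c+\ad_y^2 c$, which is nonzero for generic $c$. The actual ambiguity is the kernel of $(\alpha,\beta)\mapsto[x,\alpha]+[y,\beta]$, i.e.\ $\sder_2^{(p-1)}(\mathbb F_p)$, and this kernel is nontrivial in general (already for $p=11$). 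Second, $\tr(x\partial_x a+y\partial_y b)$ is \emph{not} invariant under your stated gauge either: a short computation shows it changes by $-\tr\big((\partial_x c)\,xy+(\partial_y c)\,yx\big)$. More to the point, the divergence is not identically zero on $\sder_2$ (this is exactly why $\mathfrak{krv}$ is a proper subalgebra), so matching only the Jacobson equation cannot by itself transport the divergence identity from $(A,B)$ to $(a,b)$.

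To make your argument go through you would have to identify the residues of the Alekseev--Torossian pair $(A,B)$ \emph{directly} with $(\sigma_p(-x-y,x),\sigma_p(-x-y,y))$, by tracing the explicit formula expressing $(A,B)$ in terms of $\psi$ (or $\Phi$) and then reducing modulo $p$ in degree $p-1$. That bypasses the gauge issue entirely. As written, however, the proof is incomplete at exactly the point where the constant is supposed to be transferred.
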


\begin{proof}
By Propositions 4.2 and 4.3 in \cite{AT}, equations \eqref{gt1}, \eqref{grt2},
and \eqref{grt3} for $\sigma_p(x,y)$ imply that
${\rm tr}( x (\partial_x a) + y (\partial_y b)) = {\rm tr} (f(x+y)-f(x)-f(y))$,
where $f$ is a polynomial in one variable. Since the degree of the left-hand side
is equal to $p-1$, we conclude that $f(x)=\alpha x^{p-1}$ for some $\alpha \in
\mathbb{F}_p$.

In order to determine $\alpha$, let $a(x,y)=u \ad_x^{p-2} y + \dots$,
$b(x,y) = v \ad_x^{p-2} y + \dots$, where the dots stand for the lower degree terms in $x$.
By analyzing the terms of the form $x^{p-1}y$ in equation \eqref{Jacobson},
we find $u=-1$. By looking at the terms of the form $yx^{p-2}y$, we obtain $v=1/2$.
This yields ${\rm tr}(x (\partial_x a) + y (\partial_y b)) = -1/2 \, {\rm tr}(x^{p-2} y) + \dots$.
Hence, $f(x)=x^{p-1}/2$, as required.
\end{proof}

\begin{remark}
Equations \eqref{Jacobson} and \eqref{KV} define the Kashiwara--Vergne type  factorization
problem for the Jacobson element. It was posed by Duflo \cite{Duflo}. A partial solution
was given by the second author in \cite{Masha}. Propositions \ref{Jacob} and \ref{tr}
settle the problem in the positive in full generality.
\end{remark}

\section{Rational associators}

\subsection{The group ${\rm GRT}_\textit{nat}$} Let $p$ be a prime.  Following \cite{Lazard}, we extend the valuation $v_p$ from $\mathbb{Q}$ to $\mathbb{Q}\langle\!\langle x,y\rangle\!\rangle$ by setting $v_p(x)=v_p(y)=p/(p-1)^2$, and consider a subset of $\mathfrak{grt}(\mathbb{Q})$
$$
\mathfrak{grt}_\textit{nat}=\{ \psi \in \mathfrak{grt}(\mathbb{Q});\, v_p(\psi) \geq 1/(p-1)
 \text{ for  all $p$ prime} \} .
$$
In other words, for $\psi(x,y) \in \mathfrak{grt}_\textit{nat}$, we have
$\psi \in \sum_{n=1}^\infty p^{-b(n)} \Z_p\langle x,y \rangle$, where
$$b(n)=\Big[\frac{p}{(p-1)^2}\,n-\frac{1}{p-1}\Big].$$
It is easy to see that $\mathfrak{grt}_\textit{nat}$ is a Lie subalgebra over $\mathbb{Z}$ of
$\mathfrak{grt}(\mathbb{Q})$. Indeed,
$v_p([\phi, \psi]_{\mathfrak{grt}})\geq v_p(\phi)+v_p(\psi) \geq 2/(p-1)$.

Similarly, we introduce
$$
{\rm GRT}_\textit{nat}=\{ g \in {\rm GRT}(\mathbb{Q});\, v_p(g-1) \geq 1/(p-1)
\text{ for  all $p$ prime} \} .
$$
Again, for $g \in {\rm GRT}_\textit{nat}$, we have $g(x,y) \in 1 +
\sum_{n=1}^\infty p^{-b(n)} \Z_p\langle x,y \rangle$. ${\rm GRT}_\textit{nat}$ is
a subgroup of ${\rm GRT}(\mathbb{Q})$.

\begin{proposition}   \label{exp}
The exponential map
$\exp_{\mathfrak{grt}}: \mathfrak{grt}_\textit{nat} \to {\rm GRT}_\textit{nat}$
is a bijection.
\end{proposition}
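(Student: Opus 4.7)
The plan is to identify $\exp_{\mathfrak{grt}}$ with the ordinary exponential series in the completed associative algebra $\mathbb{Q}\langle\!\langle x,y\rangle\!\rangle$. Since $\mathfrak{grt}(\mathbb{Q})$ consists of primitive elements and $\mathrm{GRT}(\mathbb{Q})$ of the corresponding group-like elements, the usual series $\exp$ and $\log$ yield mutually inverse bijections $\mathfrak{grt}(\mathbb{Q})\leftrightarrow\mathrm{GRT}(\mathbb{Q})$. Granting this classical identification, it suffices to verify that the two series preserve the $p$-adic valuation condition $v_p(\cdot)\geq 1/(p-1)$ cutting out $\mathfrak{grt}_\textit{nat}$ from $\mathfrak{grt}(\mathbb{Q})$ and $\mathrm{GRT}_\textit{nat}$ from $\mathrm{GRT}(\mathbb{Q})$; the bijectivity statement then follows automatically.

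For the forward direction, I would take $\psi\in\mathfrak{grt}_\textit{nat}$ and combine multiplicativity of $v_p$ with Legendre's formula $v_p(n!)=(n-s_p(n))/(p-1)$, where $s_p(n)$ denotes the sum of base-$p$ digits of $n$. The estimate $v_p(\psi)\geq 1/(p-1)$ then gives, for every $n\geq 1$,
\[v_p\!\left(\frac{\psi^n}{n!}\right)\;\geq\;\frac{n}{p-1}-\frac{n-s_p(n)}{p-1}\;=\;\frac{s_p(n)}{p-1}\;\geq\;\frac{1}{p-1},\]
so the ultrametric inequality yields $v_p(\exp(\psi)-1)\geq 1/(p-1)$ for every prime $p$, i.e.\ $\exp(\psi)\in\mathrm{GRT}_\textit{nat}$.

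For the reverse direction I would apply the analogous strategy to $\log(g)=\sum_{n\geq 1}(-1)^{n-1}(g-1)^n/n$. Writing $n=p^k m$ with $\gcd(m,p)=1$ and using the geometric series bound $v_p(n)=k\leq(p^k-1)/(p-1)\leq(n-1)/(p-1)$, one obtains
\[v_p\!\left(\frac{(g-1)^n}{n}\right)\;\geq\;\frac{n}{p-1}-\frac{n-1}{p-1}\;=\;\frac{1}{p-1}\]
for every $n\geq 1$, hence $v_p(\log(g))\geq 1/(p-1)$ and $\log(g)\in\mathfrak{grt}_\textit{nat}$.

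I do not expect a serious obstacle: the statement reduces entirely to the two elementary $p$-adic inequalities above. The only point worth emphasizing is that both estimates are exactly tight at the threshold $1/(p-1)$—Legendre's formula provides precisely the digit-sum gap $s_p(n)\geq 1$, and the bound on $v_p(n)$ saturates at prime powers—so the subset $\mathfrak{grt}_\textit{nat}$ is the most one can ask of the exponential image of something with $v_p(\psi)\geq 1/(p-1)$.
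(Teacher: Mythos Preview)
Your core valuation estimates are correct and are exactly the ones used in the paper; the gap is in the identification you take for granted at the outset. The map $\exp_{\mathfrak{grt}}$ is \emph{not} the ordinary exponential $\psi\mapsto\sum_{n\geq 0}\psi^n/n!$ in $\mathbb{Q}\langle\!\langle x,y\rangle\!\rangle$. The group law on $\mathrm{GRT}$ is
\[
(f_1\cdot f_2)(x,y)=f_1\bigl(f_2(x,y)\,x\,f_2(x,y)^{-1},\,y\bigr)\,f_2(x,y),
\]
not associative multiplication, and correspondingly the bracket on $\mathfrak{grt}$ is the Ihara bracket, not the commutator. Concretely, if $\psi\in\mathfrak{grt}$ then the associative exponential $\exp(\psi)$ need not satisfy the hexagon relation: $\exp(\psi(z,x))\exp(\psi(y,z))\exp(\psi(x,y))$ equals, by BCH, the exponential of $\psi(z,x)+\psi(y,z)+\psi(x,y)$ plus nonzero commutator terms, so it is generally $\neq 1$ even though the linear part vanishes. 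Thus your ``classical identification'' fails and the computation with $\psi^n$ does not address the proposition.

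The paper instead writes the $\mathrm{GRT}$-exponential as $g=\sum_{n\geq 0} g_n/n!$ with the recursion $g_0=1$, $g_n=\psi\cdot g_{n-1}+g_{n-1}\psi$, where $\psi\cdot(-)$ denotes the derivation action $x\mapsto[\psi,x]$ on the first variable; the $\mathrm{GRT}$-logarithm has an analogous recursion involving the substitution $x\mapsto gxg^{-1}$. The point is that each step of either recursion raises $v_p$ by at least $v_p(\psi)$ (resp.\ $v_p(g-1)$), so one still gets $v_p(g_n)\geq n/(p-1)$ and $v_p(\psi_n)\geq n/(p-1)$, after which your inequalities $v_p(n!)\leq (n-1)/(p-1)$ and $v_p(n)\leq (n-1)/(p-1)$ finish the job exactly as you wrote. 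So your argument is salvageable once you replace $\psi^n$ and $(g-1)^n$ by the correct iterates $g_n$ and $\psi_n$ and check that the derivation/substitution steps respect the valuation.
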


\begin{proof}
The $\mathfrak{grt}$ exponential map $\exp_{\mathfrak{grt}}: \psi \mapsto g$
is described by the formula $g=\sum_{n=0}^\infty g_n/n!$, where
$g_0=1$ and $g_n=\psi \cdot g_{n-1} + g_{n-1} \psi$. In particular, $g_1=\psi$
and $v_p(g_1)\geq 1/(p-1)$. By induction, we obtain $v_p(g_n) \geq n/(p-1)$,
and so $v_p(g_n/n!)=v_p(g_n)-v_p(n!) \geq 1/(p-1)$, where we used the estimate
$v_p(n!) \leq (n-1)/(p-1)$. Hence, $v_p(g-1) \geq 1/(p-1)$, as required.

The ${\rm GRT}$ logarithmic map $\ln_{\text{GRT}}: g \mapsto \psi$ is given by
$\psi= \sum_{n=1}^\infty (-1)^{n+1} \psi_n/n$, where $\psi_1=g-1$
and
$$\psi_n(x,y)=\psi_{n-1}(gxg^{-1},y)-\psi_{n-1}(x,y)+\psi_{n-1}(gxg^{-1},y)\,\psi_1(x,y).$$
Using $v_p(\psi_1)=v_p(\g-1)\geq 1/(p-1)$, we prove by induction
that $v_p(\psi_n)\geq n/(p-1)$. Therefore, $v_p(\psi_n/n) \geq n/(p-1)-v_p(n)
\geq n/(p-1) - v_p(n!) \geq 1/(p-1)$, and $v_p(\psi) \geq 1/(p-1)$ as required.
\end{proof}


%
\begin{remark}
The inequality $v_p(k!)\leq (k-1)/(p-1)$ implies
\[a_p(n)\leq b_p(n).\]
\end{remark}

\subsection{Natural rational associators}
We call a rational associator $\Phi\in\mathrm{Assoc}_1(\mathbb{Q})$
{\em natural} if it satisfies the denominator estimates
$\Phi \in 1+\sum_{n=1}^\infty \, p^{-b_p(n)} \Z_p\langle x,y\rangle^n$ for all
prime $p$, i.e., if $v_p(\Phi-1)\geq1/(p-1)$ for every $p$. We denote by ${\rm Assoc}_\textit{nat}$ the set of (rational) natural
associators; similarly, we set $\text{Assoc}_\textit{nat}^{(k)}=\mathrm{Assoc}_1(\mathbb{Q})^{(k)}\cap \big(1+\sum_{n=1}^k \, p^{-b_p(n)} \Z_p\langle x,y\rangle^n\big)$. The main result of this paper is as follows:

\begin{theorem}   \label{maintheorem}
The set of natural associators ${\rm Assoc}_\textit{nat}$ is nonempty, and, for every $k$, the truncation map ${\rm Assoc}_\textit{nat}\to{\rm Assoc}_\textit{nat}^{(k)}$ is surjective.
\end{theorem}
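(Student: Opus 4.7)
The plan is to construct a natural rational associator by induction on the truncation degree $k$, producing from any given $\Phi^{(k)} \in {\rm Assoc}_\textit{nat}^{(k)}$ an extension $\Phi^{(k+1)} \in {\rm Assoc}_\textit{nat}^{(k+1)}$. The inverse limit of a compatible system yields an element of ${\rm Assoc}_\textit{nat}$; starting the induction at $\Phi^{(0)} = 1$ establishes non-emptiness, and the freedom to start from any prescribed $\Phi^{(k)}$ yields the surjectivity of the truncation map ${\rm Assoc}_\textit{nat} \to {\rm Assoc}_\textit{nat}^{(k)}$.

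The inductive step begins with Drinfeld's surjectivity ${\rm Assoc}_1^{(k+1)}(\mathbb{Q}) \to {\rm Assoc}_1^{(k)}(\mathbb{Q})$ (Proposition 5.8 of \cite{Dr}), which provides some rational lift $\tilde\Phi^{(k+1)}$ of $\Phi^{(k)}$. Since the linearization of the associator equations in degree $k+1$ is exactly $\mathfrak{grt}_{k+1}$, any other rational lift has the form $\tilde\Phi^{(k+1)} + \gamma$ with $\gamma$ in the degree-$(k+1)$ component $\mathfrak{grt}_{k+1}(\mathbb{Q})$. The task reduces to finding a rational $\gamma \in \mathfrak{grt}_{k+1}(\mathbb{Q})$ such that the degree-$(k+1)$ coefficients of $\tilde\Phi^{(k+1)} + \gamma$ lie in $p^{-b_p(k+1)}\mathbb{Z}_p$ for every prime $p$.

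For each prime $p$, Theorem \ref{Phiestimate} produces a natural $p$-adic associator. An auxiliary lemma, to be proved by degree-by-degree induction in the spirit of Proposition \ref{exp}, asserts that the unique ${\rm GRT}(\mathbb{Q}_p)$-element matching two $p$-adically natural truncated associators is itself natural at $p$; using it, I would act on the $p$-adic associator by a natural $p$-adic ${\rm GRT}$-element so that its truncation below degree $k+1$ becomes $\Phi^{(k)}$, thereby exhibiting a $p$-adically natural degree-$(k+1)$ shift $\gamma_p \in \mathfrak{grt}_{k+1}(\mathbb{Q}_p)$ of $\tilde\Phi^{(k+1)}$. Outside a finite set $S$ of primes --- consisting of the primes $p \leq k+2$ (the only ones with $b_p(k+1) > 0$) where the bound is violated, together with the finitely many primes occurring in the denominators of $\tilde\Phi^{(k+1)}$ --- the trivial choice $\gamma_p = 0$ already suffices.

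The argument closes by invoking weak approximation for the finite-dimensional $\mathbb{Q}$-vector space $\mathfrak{grt}_{k+1}(\mathbb{Q})$: fixing a rational basis, the constraints $v_p(\gamma - \gamma_p) \geq 1/(p-1)$ for $p \in S$ together with $p$-integrality of $\gamma$ for $p \notin S$ form a finite system of $p$-adic congruences, jointly realizable over $\mathbb{Q}$ by the Chinese remainder theorem applied coordinate-wise. The resulting $\Phi^{(k+1)} := \tilde\Phi^{(k+1)} + \gamma$ then lies in ${\rm Assoc}_\textit{nat}^{(k+1)}$, closing the induction. The main technical obstacle I expect is the auxiliary naturalness claim for the matching ${\rm GRT}$-element; the remainder is a clean combination of the free transitive ${\rm GRT}$-action on associators with classical strong approximation over $\mathbb{Q}$.
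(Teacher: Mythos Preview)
Your proposal is correct and rests on the same three ingredients as the paper: the $p$-adic associators of Theorem~\ref{Phiestimate}, the Chinese Remainder Theorem to patch the local data into a rational element, and the fact that the ${\rm GRT}$-action preserves the natural denominator bounds (your auxiliary lemma, which indeed follows by the valuation argument you indicate). The organization, however, is genuinely different. You run a direct induction, extending a given $\Phi^{(k)}\in{\rm Assoc}_\textit{nat}^{(k)}$ one degree at a time by correcting an arbitrary rational lift by an element of $\mathfrak{grt}_{k+1}(\mathbb{Q})$, with the correction produced by CRT from the $p$-adic targets $\gamma_p$. The paper instead passes through the bijection $\psi\leftrightarrow\Phi$ of Proposition~\ref{uptodegreek}: it approximates the $p$-adic $\psi_p^{(k)}$'s by a rational $\psi^{(k)}$ via CRT on an integral basis of $\mathfrak{gt}_1^{(k)}$, obtains truncated associators $\Phi^{(k)}$ which are \emph{not} compatible under truncation, and then repairs compatibility a posteriori by lifting the transition elements $g_k\in{\rm GRT}_\textit{nat}^{(k)}$ to ${\rm GRT}_\textit{nat}$ (Proposition~\ref{exp}). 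Your route avoids the $\mathfrak{gt}$ detour and the separate compatibility repair, at the price of needing your auxiliary lemma and the $\mathbb{Q}_p$-version of the lift ${\rm GRT}_\textit{nat}^{(k)}\to{\rm GRT}_\textit{nat}$; both are straightforward. The paper's packaging makes the role of the free transitive ${\rm GRT}_\textit{nat}$-action on ${\rm Assoc}_\textit{nat}$ more visible, which it then uses to deduce surjectivity of truncation from the graded structure of $\mathfrak{grt}_\textit{nat}$; in your argument surjectivity is built into the induction.
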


\begin{proof}
For each prime $p$, we choose an associator $\Phi_p\in{\rm Assoc}_1(\mathbb{Q}_p)$ satisfying denominator estimates \eqref{aux}.
Let $(1,\psi_p)$ correspond to $\Phi_p$ via relation \eqref{psiphi}.
We denote by $(1,\psi_p^{(k)})$ and $\Phi_p^{(k)}$ their degree $k$ truncations.

We choose an element $(1, \tilde{\psi}) \in \mathfrak{gt}(\mathbb{Q})$.
For each $k \in \mathbb{N}$, let $(1, \tilde{\psi}^{(k)}) \in \mathfrak{gt}^{(k)}(\mathbb{Q})$
be its degree $k$ truncation, and let $\phi_i, i=1, \dots, N_k$ be
an integral basis in
$\mathfrak{gt}_1^{(k)}(\mathbb{Q})$ (i.e., $\phi_i$'s are non-commutative polynomials in $x$ and $y$
with integer coefficients). Let $P_k$ be the finite set of primes that either do not exceed $k$
or enter denominators of $\tilde{\psi}^{(k)}$.
For each $p \in P_k$, we choose two sufficiently large
positive integers $s_k(p), t_k(p)$ (the choice will be discussed later).

For each $p\in P_k$, we have
$$
(1, \psi_p^{(k)})=(1, \tilde{\psi}^{(k)}) + \sum_{i=1}^{N_k} \kappa_{p,i} \phi_i
$$
with $\kappa_{p,i} \in \mathbb{Q}_p$. Put $M_k=\prod_{p\in P_k} p^{s_k(p)}$. If the numbers
$s_k(p)$ are sufficiently large, we have $M\kappa_{p,i} \in \mathbb{Z}_p$ for all
$p \in P_k$ and for all $i$. By Chinese Remainder Theorem, we can choose
integers $\mu_i \in \mathbb{Z}$ such that $\mu_i - M\kappa_{p,i} =0 \, {\rm mod} \, p^{t_k(p)}$
for all $p \in P_k$. Define an element
$$
(1, \psi^{(k)})=(1, \tilde{\psi}^{(k)}) + \frac{1}{M_k} \, \sum_{i=1}^{N_k} \mu_i \phi_i
\in \mathfrak{gt}^{(k)}(\Q) ,
$$
and the corresponding (by Proposition \ref{uptodegreek}) associator
$\Phi^{(k)} \in {\rm Assoc}_1^{(k)}(\mathbb{Q})$.

Note that, for $p \notin P_k$, the coefficients of $\psi^{(k)}$ are in $\mathbb{Z}_p$.
Since $p>k$, the associator $\Phi^{(k)}$ also has coefficients in $\mathbb{Z}_p$.
Let $p\in P_k$. Since the map $\psi^{(k)} \mapsto \Phi^{(k)}$ (defined by equation
\eqref{psiphi}) is continuous in the $p$-adic topology, we can choose sufficiently large $t_k(p)$'s
so that $\Phi^{(k)}$ be arbitrarily close to $\Phi^{(k)}_p$. In particular,
we may assume that $\Phi^{(k)} \in {\rm Assoc}_\textit{nat}^{(k)}$.

Denote by $\pi_k: {\rm Assoc}_1^{(k+1)}(\mathbb{Q}) \to {\rm Assoc}_1^{(k)}(\mathbb{Q})$
the natural projection
forgetting the degree $k+1$ part of an associator. Consider the associators
$\Phi^{(k)}$ and $\pi_k(\Phi^{(k+1)}) \in {\rm Assoc}_1^{(k)}(\mathbb{Q})$. There is a unique
element $g_k \in {\rm GRT}^{(k)}(\mathbb{Q})$ taking one of them to the other.
Let $p \notin P_k \cap P_{k+1}$. Then both  $\Phi^{(k)}$ and  $\pi_k(\Phi^{(k+1)})$
have coefficients in $\mathbb{Z}_p$, and so does $g_k$. For $p \in P_k \cap P_{k+1}$,
we choose sufficiently large $t_k(p)$ and $t_{k+1}(p)$ to obtain
$g_k \in {\rm GRT}_\textit{nat}^{(k)}$. By Proposition \ref{exp},
there is an isomorphism between $\mathfrak{\grt}_\textit{nat}$ and ${\rm GRT}_\textit{nat}$.
Since all elements of $\mathfrak{\grt}_\textit{nat}^{(k)}$ lift to elements
of $\mathfrak{\grt}_\textit{nat}$, the same applies to lifting from
${\rm GRT}_\textit{nat}^{(k)}$ to ${\rm GRT}_\textit{nat}$.
Hence, $g_k$ can be lifted
(in many ways) to an element $\hat{g}_k \in {\rm GRT}_\textit{nat}$.

We consider the sequence of elements
$$
\Phi(k)=(\hat{g}_k \hat{g}_{k-1} \dots \hat{g}_2) \cdot \Phi^{(k)} \in
{\rm Assoc}_1^{(k)}(\mathbb{Q}),
$$
where the action of ${\rm GRT}$ on ${\rm Assoc}_1^{(k)}(\mathbb{Q})$ is defined by forgetting
degrees higher than $k$. Since ${\rm GRT}_\textit{nat}$ and ${\rm Assoc}_\textit{nat}$ are defined by the same denominator conditions,
  the action of the group
${\rm GRT}_\textit{nat}$ preserves ${\rm Assoc}_\textit{nat}$ and is free and transitive. We thus have $\Phi(k) \in {\rm Assoc}_\textit{nat}^{(k)}$. By construction of $\hat{g}_k$, we have
$\pi_k(\Phi(k+1))=\Phi(k)$. Hence, the sequence $\Phi(k)$ defines an element of
${\rm Assoc}_\textit{nat}$. That is, the set ${\rm Assoc}_\textit{nat}$ is nonempty, as required.

Finally, since the action of ${\rm GRT}_\textit{nat}$ on ${\rm Assoc}_\textit{nat}$ is free and transitive, any choice of an associator $\Phi\in{\rm Assoc}_\textit{nat}$ gives a bijection ${\rm GRT}_\textit{nat}\to{\rm Assoc}_\textit{nat}$, which is moreover
compatible with the truncation maps. The surjectivity of ${\rm Assoc}_\textit{nat}\to{\rm Assoc}_\textit{nat}^{(k)}$ therefore
follows from surjectivity of ${\rm GRT}_\textit{nat}\to{\rm GRT}_\textit{nat}^{(k)}$.
\end{proof}

\begin{remark}
Let ${\rm Assoc}'_\textit{nat}\subset{\rm Assoc}_\textit{nat}$ be the set of rational associators satisfying the
denominator estimate $\Phi \in \sum_{n=0}^\infty p^{-a_p(n)} \Z_p\langle x,y \rangle^{n}$.
We can still prove that ${\rm Assoc}'_\textit{nat}$ is nonempty, if we replace ${\rm GRT}_\textit{nat}$
with a similar group given by $v_p(x)=v_p(y)=1/(p-1)$. However, we were not able
to establish the surjectivity property for truncation maps of ${\rm Assoc}'_\textit{nat}$.
\end{remark}

\begin{remark}
Theorem \ref{maintheorem} remains valid if we require natural associators
to be even (that is, $\Phi(x,y)=\Phi(-x,-y)$). Indeed, let
$\Phi^{(2k)}\in\text{Assoc}_1^{(2k)}(\Q)$ be an even natural associator up
to degree $2k$. By Remark 1 (after Proposition 5.8) in \cite{Dr},
$\Phi^{(2k)}$ can be extended by zero in degree $2k+1$ to obtain an even
natural associator up to degree $2k+1$. Then, by theorem
\ref{maintheorem}, $\Phi^{(2k)}$ extends (in many ways) to a natural
associator $\Phi^{(2k+2)}$ up to degree $2k+2$. $\Phi^{(2k+2)}$ is even
since $\Phi^{(2k)}$ is even and the contribution in degree $2k+1$
vanishes. Hence, even natural associators can be extended from lower to
higher degrees without obstructions, as required.
\end{remark}

\begin{remark}
In \cite{AET}, it is proved that the Knizhnik--Zamolodchikov associator $\Phi_\textit{KZ}$
has a finite convergence radius (as a non-commutative power series). It is natural to
conjecture the existence of rational natural associators with finite convergence
radius. We do not know how to prove (or disprove) this conjecture. More generally, it would be interesting to establish the existence of rational associators with given bounds on both denominators and numerators.
\end{remark}

\section{Kontsevich knot invariant}

Drinfeld associators can be used to define a universal finite type
invariant of knots in $\R^3$. Below we give a brief account of this
construction.

\begin{figure}[h]
\begin{tikzpicture}
\draw[thick,->] (0,0) circle (1cm);
\draw[thick,->] (1,0)--(1,0.05);
\draw[thick,->] (-1,0)--(-1,-0.05);
\draw[thin,red] (30:1cm) -- (140:1cm) (80:1cm)--(200:1cm) (290:1cm)--(160:1cm) (240:1cm)--(320:1cm);
\end{tikzpicture}
\caption{A chord diagram}\label{fig:chord}
\end{figure}
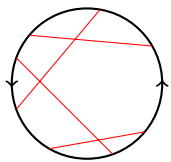

Let $\k$ be a field of characteristic zero. A chord diagram (see Fig.~\ref{fig:chord})
is an oriented circle with a choice of pairs of marked points modulo
orientation preserving
diffeomorphisms. The two points of a pair are
connected
by a chord.  The algebra ${\rm Chord}(\k)$ is spanned by the
chord diagrams modulo the 4T relation
\[
\begin{tikzpicture}[scale=0.7,baseline=-0.5ex]
\draw[thick,->] (-20:1cm) arc (-20:20:1cm);
\draw[thick,->] (100:1cm) arc (100:140:1cm);
\draw[thick,->] (220:1cm) arc (220:260:1cm);
\draw[dotted] (0,0) circle (1cm);
\draw[thin,red] (0:1cm)--(115:1cm) (240:1cm)--(125:1cm);
\end{tikzpicture}
\;-\;
\begin{tikzpicture}[scale=0.7,baseline=-0.5ex]
\draw[thick,->] (-20:1cm) arc (-20:20:1cm);
\draw[thick,->] (100:1cm) arc (100:140:1cm);
\draw[thick,->] (220:1cm) arc (220:260:1cm);
\draw[dotted] (0,0) circle (1cm);
\draw[thin,red] (0:1cm)--(130:1cm) (240:1cm)--(110:1cm);
\end{tikzpicture}
\;+\;
\begin{tikzpicture}[scale=0.7,baseline=-0.5ex]
\draw[thick,->] (-20:1cm) arc (-20:20:1cm);
\draw[thick,->] (100:1cm) arc (100:140:1cm);
\draw[thick,->] (220:1cm) arc (220:260:1cm);
\draw[dotted] (0,0) circle (1cm);
\draw[thin,red] (0:1cm)--(245:1cm) (235:1cm)--(120:1cm);
\end{tikzpicture}
\;-\;
\begin{tikzpicture}[scale=0.7,baseline=-0.5ex]
\draw[thick,->] (-20:1cm) arc (-20:20:1cm);
\draw[thick,->] (100:1cm) arc (100:140:1cm);
\draw[thick,->] (220:1cm) arc (220:260:1cm);
\draw[dotted] (0,0) circle (1cm);
\draw[thin,red] (0:1cm)--(230:1cm) (250:1cm)--(120:1cm);
\end{tikzpicture}
\;=\;0
\]
and modulo
the relation
\[
\begin{tikzpicture}[scale=0.7,baseline=0.3cm]
\draw[thick,->] (-1,1) parabola bend(0,0) (1,1);
\draw[dotted] (-1.2,1.44) parabola bend(0,0) (1.2,1.44);
\draw[thin,red] (-0.8,0.64)--(0.8,0.64);
\end{tikzpicture}
=\;0.
\]
 The product in ${\rm
Chord}(\k)$
is defined as a connected sum of chord diagrams.
The
4T relation insures that the product map is well-defined.
\[
\begin{tikzpicture}[scale=0.7,baseline=-0.5ex]
\draw[thick](0,0) circle (1cm);
\draw[thin, red] (55:1cm)--(205:1cm) (305:1cm)--(155:1cm) (90:1cm)--(270:1cm);
\draw[thick,->](1,0)--(1,0.05);
\end{tikzpicture}
\;\times\;
\begin{tikzpicture}[scale=0.7,baseline=-0.5ex]
\draw[thick](0,0) circle (1cm);
\draw[thin, red] (60:1cm)--(240:1cm) (-60:1cm)--(-240:1cm);
\draw[thick,->](1,0)--(1,0.05);
\end{tikzpicture}
\;=
\begin{tikzpicture}[scale=0.7,baseline=-0.5ex]
\draw[thick] (30:1cm) arc (30:330:1cm);
\draw[thick] (2.5,0) +(-150:1cm) arc (-150:150:1cm);
\draw[thick] (30:1cm)..controls ++(-60:0.5) and ++(-120:0.5) .. (1.634,0.5) (-30:1cm)..controls ++(60:0.5) and ++(120:0.5) .. (1.634,-0.5);
\draw[thick,->] (-30:1cm)--+(60:0.05cm);
\draw[thick,->,xshift=2.5cm] (150:1cm)--+(-120:0.05cm);
\draw[thin, red] (55:1cm)--(205:1cm) (305:1cm)--(155:1cm) (90:1cm)--(270:1cm);
\draw[thin, red, xshift=2.5cm] (60:1cm)--(240:1cm) (-60:1cm)--(-240:1cm);

\end{tikzpicture}
\]

The algebra ${\rm Chord}(\k)$ is graded with the grading given by the number of chords.
We assume that ${\rm Chord}(\k)$ is degree completed to allow
infinite
linear combinations of chord diagrams.
The linear combinations of chord diagrams with integer coefficients span a
lattice
${\rm Chord}(\Z)\in {\rm Chord}(\k)$.
We define a subring (over $\Z$) of ${\rm Chord}(\mathbb{Q})$,
$$
{\rm Chord}_{nat}=\sum_{n=0}^\infty \, D(n)^{-1} {\rm Chord}_n(\Z),
$$
where $D(n)$ is given by equation \eqref{D(n)} and
${\rm Chord}_n(\Z)$ is the graded component of degree $n$ of ${\rm
Chord}(\Z)$.

There are several steps in constructing the Kontsevich knot invariant
(for details, see \cite{Bar-Natan 2,Le Murakami}).
First, one associates to the knot $K$ a knot diagram $\tilde{K}$
(a projection of $K$ onto a 2-plane). By a small perturbation of the embedding,
one can choose $\tilde{K}$ in such a way that it has only transversal
simple crossings. One can also assume that the height (the $y$-coordinate) is a Morse function and that all the crossings and critical points of $y$ have different heights.

One can then decompose the knot
diagram into a sequence of basic elements. There are six possible basic elements
(up to change of orientation),
\[
a=\tikz{\draw[thick,->] (0,0) arc (180:0:0.3cm);}
\;,\quad
a'=\tikz{\draw[thick,->] (0,0) arc (0:-180:0.3cm);}
,\quad
b=\tikz[scale=0.3, baseline=-0.5ex]{\draw[thick,->](1,-1)--(-1,1);
\draw[line width=4pt, white] (-1,-1)--(1,1);
\draw[thick,->] (-1,-1)--(1,1);}
,\quad
b^{-1}=\tikz[scale=0.3, baseline=-0.5ex]{\draw[thick,->](-1,-1)--(1,1);
\draw[line width=4pt, white] (1,-1)--(-1,1);
\draw[thick,->] (1,-1)--(-1,1);}
,\quad
c=\tikz[scale=0.3, baseline=-0.5ex]{\draw[thick,->](-1,-1)--(-1,1);
\draw[thick,->] (1,-1)--(1,1);
\draw[thick,->](-0.4,-1)..controls ++(0,1) and ++(0,-1) .. (0.4,1);}
\;,\quad
c^{-1}=\tikz[scale=0.3, baseline=-0.5ex]{\draw[thick,->](-1,-1)--(-1,1);
\draw[thick,->] (1,-1)--(1,1);
\draw[thick,->](0.4,-1)..controls ++(0,1) and ++(0,-1) .. (-0.4,1);}\;.
\]

Then one decorates the basic elements with (horizontal) chords according
to the
following rules: elements $a$ and $a'$ remain unchanged, the element $b$
is decorated
with $\exp(t/2)$, where $t$ is a horizontal chord extending between two
strands,
\[
\tikz[scale=0.3,baseline=-0.5ex]{\draw[thick,->](1,-1)--(-1,1);
\draw[line width=4pt, white] (-1,-1)--(1,1);
\draw[thick,->] (-1,-1)--(1,1);}
\longrightarrow
\tikz[scale=0.3,baseline=-0.5ex]{\draw[thick,->](1,-1)--(-1,1);
\draw[line width=4pt, white] (-1,-1)--(1,1);
\draw[thick,->] (-1,-1)--(1,1);}
+\frac{1}{2}
\tikz[scale=0.3,baseline=-0.5ex]{\draw[thick,->](1,-1)--(-1,1);
\draw[line width=4pt, white] (-1,-1)--(1,1);
\draw[thick,->] (-1,-1)--(1,1);
\draw[thin, red] (-0.5,0.5)--(0.5,0.5);}
+\frac{1}{2^2\times 2!}
\tikz[scale=0.3,baseline=-0.5ex]{\draw[thick,->](1,-1)--(-1,1);
\draw[line width=4pt, white] (-1,-1)--(1,1);
\draw[thick,->] (-1,-1)--(1,1);
\draw[thin, red] (-0.4,0.4)--(0.4,0.4) (-0.6,0.6)--(0.6,0.6);}
+\cdots\;.
\]
 Similarly, the element $b^{-1}$ is decorated with
$\exp(-t/2)$. Finally, one chooses a Drinfeld associator $\Phi\in\text{Assoc}_1(\k)$ and
decorates
elements $c$ and $c^{-1}$ with
$\Phi(t_{1,2}, t_{2,3})$ and $\Phi(t_{1,2}, t_{2,3})^{-1}$, respectively.
Here $t_{1,2}$ is a horizontal chord extending between strands $1$ and $2$
and $t_{2,3}$ is a horizontal chord extending between strands $2$ and $3$. To get a decoration of basic elements with some
strands with reversed orientation one uses the following rule: let $S$ be
a strand with $k$ marked points (ends of various chords). Then reversing
orientation of $S$ is accompanied by adding the sign $(-1)^k$.

The decorated knot diagram is an element of ${\rm Chord}(\k)$. We denote it by $I_\Phi(\tilde{K})$. By abuse of notation, we denote by $\tilde{K}$
the knot diagram together with its decomposition into basic elements.

\begin{proposition}
Let $\Phi$ be a natural rational associator. Then
$I_\Phi(\tilde{K}) \in {\rm Chord}_{nat}$, for all
knot diagrams $\tilde{K}$.
\end{proposition}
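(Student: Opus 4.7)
The plan is to reduce the statement to two independent ingredients: (i) ${\rm Chord}_{nat}$ is a graded subring of ${\rm Chord}(\mathbb{Q})$, and the analogous subrings of chord algebras on several strands are preserved by the horizontal stacking and by the closing-up operation used to assemble $I_\Phi(\tilde K)$ into a chord diagram on a circle; (ii) every local decoration attached to one of the basic elements $a,a',b^{\pm1},c^{\pm1}$ lies in the appropriate natural subring. Once these two ingredients are in hand, the conclusion is immediate because $I_\Phi(\tilde K)$ is by construction a product of such decorations.

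For (i), I would establish the arithmetic fact $D(n+m)\mid D(n)\,D(m)$ for all $n,m\geq 0$, which by the definition of $D$ is equivalent to the pointwise superadditivity $b_p(n+m)\geq b_p(n)+b_p(m)$ for every prime $p$. Writing $\alpha_p(k)=\frac{pk}{(p-1)^2}-\frac{1}{p-1}$, one has $\alpha_p(n+m)=\alpha_p(n)+\alpha_p(m)+\frac{1}{p-1}$, so the desired inequality follows from the standard bounds $[x+c]\geq [x]$ for $c\geq 0$ and $[x+y]\geq [x]+[y]$. Both the 4T relation and the framing relation have integer coefficients, so the argument descends to the quotient, and the same reasoning applies to the chord algebras on several strands in which the basic elements naturally live.

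For (ii), I would dispose of each basic element in turn. The caps $a$, $a'$ carry the trivial decoration. The crossings $b^{\pm 1}$ carry $\exp(\pm t/2)$, whose degree-$n$ denominator is $2^n n!$; it suffices to check $v_p(2^n n!)\leq b_p(n)$ for every prime $p$, which is a short computation separating $p=2$ (where $v_2(2^n n!)=n+v_2(n!)\leq 2n-1=b_2(n)$) from the odd primes (where $v_p(n!)\leq (n-1)/(p-1)\leq \alpha_p(n)$, hence $\leq b_p(n)$ since $v_p(n!)$ is an integer). The crossings $c^{\pm 1}$ carry $\Phi(t_{1,2},t_{2,3})^{\pm 1}$; for $\Phi$ the bound is exactly the definition of a natural associator, and for $\Phi^{-1}$ it follows from the geometric series $\Phi^{-1}=\sum_{k\geq 0}(1-\Phi)^k$ together with the subring property established in (i). The orientation-reversal rule contributes only global signs $(-1)^k$ and is harmless.

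The step I expect to be the main obstacle is not the $p$-adic book-keeping itself but the multi-strand compatibility underlying (i): one must verify carefully that substitution of the free generators $x,y$ of $\Phi$ by the infinitesimal braid generators $t_{1,2},t_{2,3}$ sends integral formal power series to integral multi-strand chord diagrams, and that the horizontal stackings together with the final closing-up to a circle respect the denominator bounds in exactly the way that ordinary multiplication in ${\rm Chord}(\mathbb{Q})$ does. Once this structural point is settled, the rest is routine.
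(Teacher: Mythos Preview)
Your proposal is correct and follows essentially the same route as the paper: both arguments reduce to the superadditivity $b_p(n+m)\geq b_p(n)+b_p(m)$ together with the elementary check that the degree-$n$ denominator $2^n n!$ of $\exp(\pm t/2)$ divides $D(n)$. The only cosmetic difference is that the paper packages the latter as the mixed inequalities $b_p(k+l)\geq b_p(k)+v_p(l!)$ for $p\geq 3$ and $b_2(k+l)\geq b_2(k)+v_2(l!)+l$, whereas you verify $v_p(2^n n!)\leq b_p(n)$ directly and then invoke superadditivity; your explicit treatment of $\Phi^{-1}$ via the geometric series is a detail the paper leaves implicit.
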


\begin{proof}
Assume that the knot diagram $\tilde{K}$ decomposes into $s$ basic
elements
of type $b$ and $b^{-1}$, $t$ basic elements of type $c$ and $c^{-1}$, and some number of basic
elements of type $a$ and $a'$. Then denominators of degree $n$ contributions
in $I_\Phi(\tilde{K})$ are of the form
$$
D(k_1) \dots D(k_s) (2^{l_1} l_1!) \dots (2^{l_t} l_t!),
$$
where $k_1 + \dots k_s + l_1 + \dots + l_t=n$. Here contributions
$D(k_1), \dots, D(k_s)$ come from denominators of the natural
associator $\Phi$, and remaining terms are denominators
of the exponential functions $\exp(t/2)$ and $\exp(-t/2)$.

Since $b_p(k+l) \geq b_p(k) + b_p(l)$, the product $D(k_1) \dots D(k_s)$ is a
divisor of $D(k)$ for $k=k_1+\dots +k_s$. Furthermore,
for $p\geq 3$, we have  $b_p(k+l) \geq b_p(k) + v_p(l!)$
and $b_2(k+l) \geq b_2(k) + v_2(l!)+l$. Hence,
$D(k) (2^{l_1} l_1!) \dots (2^{l_t} l_t!)$ is a divisor
$D(n)=D(k+l)$ for $l=l_1+\dots + l_t$.
\end{proof}

Note that, for all $\tilde{K}$, we have
$I_\Phi(\tilde{K}) \in 1 + {\rm Chord}_{\geq 1}(\k)$. Hence, if $\Phi$
is a natural rational associator, $I_\Phi(\tilde{K})$ is an invertible
element
of ${\rm Chord}_{nat}$.

Let $\tilde{K}_0$ be the following knot diagram:
\[
\tikz[scale=0.5]{\draw[thick] (-0.8,1)..controls ++(0.5,0) and ++(-0.5,0)..(0,0.3) ..controls ++(0.5,0) and ++(-0.5,0)..(0.8,0.7)
 ..controls ++(1,0) and ++(1,0)..(0,-1)
 ..controls ++(-1,0) and ++(-1,0)..(-0.8,1) (2,-0.2) node{$\tilde{K}_0$};}.
\]
The Kontsevich invariant of $K$ is defined as 
$$
I(K) = I_\Phi(\tilde{K}) I_\Phi(\tilde{K}_0)^{-X},
$$
where $X$ is the number of local maxima of the height function on
$\tilde{K}$.
The information about $I(K)$ is summarized in the following theorem (see
\cite{Le Murakami}).

\begin{theorem}
$I(K)$ is a knot invariant. That is, it is independent of the choice of
the knot diagram associated to the knot $K$, of the decomposition of
$\tilde{K}$ into
basic elements, and of the choice of a Drinfeld associator $\Phi$.
\end{theorem}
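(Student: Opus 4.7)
The plan is to split the theorem into its three independent invariance claims and reduce each to a standard step in the associator/knot-invariant literature (the detailed arguments can be found in \cite{Bar-Natan 2, Le Murakami}; here I sketch how they fit together).

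First, I would fix a Morse, generic knot diagram $\tilde K$ and show that $I_\Phi(\tilde K)$ depends only on $\tilde K$ and not on its particular decomposition into basic elements. Two admissible decompositions differ by a finite sequence of local moves in which basic elements are slid past each other horizontally (so that no heights coincide) or restacked vertically (so that two non-interacting pieces exchange their heights). Away from $c^{\pm 1}$-elements, these moves reduce to trivial identities in ${\rm Chord}(\k)$, sometimes after invoking the 4T relation to commute a chord past a third strand. The non-trivial moves involve three strands near a $c$ or $c^{-1}$: the move exchanging the heights of two consecutive $c$-elements corresponds exactly to the pentagon equation \eqref{assoc3}, and the moves commuting a $c$-element past a $b^{\pm 1}$ correspond to the hexagon equations \eqref{assoc1}--\eqref{assoc2}. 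Hence, the defining relations of $\mathrm{Assoc}_1(\k)$ are exactly what is needed, and the decorated tangle functor is well defined.

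Second, I would verify invariance of the normalized quantity $I(K)=I_\Phi(\tilde K)I_\Phi(\tilde K_0)^{-X}$ under the Reidemeister moves. Reidemeister II reduces to the identity $\exp(t/2)\exp(-t/2)=1$ at the level of decorations. Reidemeister III is the subtle case: combining the pentagon and hexagon equations applied to a three-strand braid diagram, together with the 4T relation in ${\rm Chord}(\k)$, one obtains the equality of the two sides of R3. Reidemeister I changes the number $X$ of local maxima by $\pm 1$ and multiplies $I_\Phi(\tilde K)$ by the factor $I_\Phi(\tilde K_0)^{\pm 1}$ (a local computation near the newly created cap/cup); the normalization $I_\Phi(\tilde K_0)^{-X}$ was designed precisely to absorb this factor.

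Third, I would show independence of the associator. Since ${\rm GRT}(\k)$ acts freely and transitively on $\mathrm{Assoc}_1(\k)$, any two associators are related by $\Phi' = g\cdot \Phi$ for some $g \in {\rm GRT}(\k)$. Under this replacement, every decoration $\Phi(t_{1,2},t_{2,3})$ at a $c$-element is modified by a factor of $g$ acting by conjugation on the three-strand window. Tracking these factors along the closed curve, successive $g$-insertions telescope; the residual contribution at local maxima is exactly cancelled by the same modification of $I_\Phi(\tilde K_0)^{-X}$, so $I(K)$ is unchanged in ${\rm Chord}(\k)$.

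The main obstacle is Reidemeister III: this is where the pentagon equation is used in an essential way, and assembling the correct chain of local moves connecting the two sides of R3 while preserving the Morse and genericity conditions requires careful diagrammatic bookkeeping. All other steps are either formal (using the free transitive actions of ${\rm GRT}$ and of the local framing correction) or reduce to direct verification from the relations in ${\rm Chord}(\k)$.
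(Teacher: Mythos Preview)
The paper does not prove this theorem at all: it is stated as a known result with the parenthetical reference ``(see \cite{Le Murakami})'' and no proof is supplied. So there is nothing in the paper to compare your argument against; you have written substantially more than the authors did.

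Your outline is broadly the standard one from \cite{Bar-Natan 2, Le Murakami} and is acceptable as a sketch. A couple of points where you are being a bit loose: invariance under Reidemeister II is not only $\exp(t/2)\exp(-t/2)=1$; when a third strand is present the cancellation also uses the inversion relation \eqref{assoc1}. More seriously, your third paragraph on associator independence is too impressionistic. The claim that ``successive $g$-insertions telescope'' hides the real content: one must show that the action of $g\in{\rm GRT}(\k)$ on the decorated tangle category intertwines the two functors $I_\Phi$ and $I_{g\cdot\Phi}$ via a natural isomorphism that becomes the identity upon closing up into ${\rm Chord}(\k)$. This is true, but it is a statement about the full braided-monoidal structure, not just a local telescoping, and it is exactly the nontrivial part of Le--Murakami's argument. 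If you want a self-contained sketch, either make this naturality precise or, as Le and Murakami originally did, compare the combinatorial construction with the analytic Kontsevich integral (which is manifestly associator-free) using $\Phi_{\textit{KZ}}$.
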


The main result of this Section is the following theorem.

\begin{theorem}\label{something}
The Kontsevich invariant $I(K) \in {\rm Chord}_{nat}$, for all knots $K$.
\end{theorem}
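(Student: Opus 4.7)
The plan is to combine Theorem \ref{maintheorem} with the preceding proposition and the invariance of $I(K)$ under the choice of associator. First I would invoke Theorem \ref{maintheorem} to fix a natural rational associator $\Phi\in{\rm Assoc}_\textit{nat}$, and, given a knot $K$, pick a knot diagram $\tilde{K}$ with a decomposition into basic elements. The preceding proposition then yields $I_\Phi(\tilde{K})\in{\rm Chord}_\textit{nat}$ and, applied to the standard diagram of the unknot, $I_\Phi(\tilde{K}_0)\in{\rm Chord}_\textit{nat}$.

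Next I would observe that ${\rm Chord}_\textit{nat}$ is in fact a subring of ${\rm Chord}(\Q)$: since $b_p(k+l)\geq b_p(k)+b_p(l)$ (already used in the proof of the preceding proposition), the product of two elements with denominator estimate $D(\,\cdot\,)^{-1}$ in each graded component satisfies the same estimate. Because $I_\Phi(\tilde{K}_0)=1+(\text{terms of degree}\geq 1)$ in the degree-completed ring ${\rm Chord}_\textit{nat}$, the geometric series
\[
I_\Phi(\tilde{K}_0)^{-1}=\sum_{n\geq 0}\bigl(1-I_\Phi(\tilde{K}_0)\bigr)^n
\]
converges in ${\rm Chord}_\textit{nat}$, so $I_\Phi(\tilde{K}_0)^{-X}\in{\rm Chord}_\textit{nat}$ for every integer $X$. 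The product $I(K)=I_\Phi(\tilde{K})\,I_\Phi(\tilde{K}_0)^{-X}$ therefore lies in ${\rm Chord}_\textit{nat}$. Finally, the preceding theorem on the invariance of $I(K)$ under the choice of diagram, decomposition and associator guarantees that this membership is a genuine property of the knot invariant rather than an artefact of the chosen $\Phi$.

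There is no real obstacle: the core analytic work has already been done in Theorem \ref{maintheorem} (producing a $\Phi$ with the right $p$-adic denominator bounds for every prime simultaneously) and in the preceding proposition (tracking how the bounds $b_p$ aggregate multiplicatively against the exponential denominators $2^{l}l!$ from the elements $b,b^{-1}$). The only point meriting explicit verification is that ${\rm Chord}_\textit{nat}$ is multiplicatively closed and contains the inverses of $1+{\rm Chord}_{\geq1}$; this is immediate from the super-additivity of $b_p$ noted above together with the fact that the connected sum operation respects the grading.
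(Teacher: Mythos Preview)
Your proposal is correct and follows essentially the same approach as the paper: choose a natural associator via Theorem~\ref{maintheorem}, apply the preceding proposition to both $\tilde{K}$ and $\tilde{K}_0$, use that ${\rm Chord}_\textit{nat}$ is a subring in which $I_\Phi(\tilde{K}_0)\in 1+{\rm Chord}_{\geq 1}$ is invertible, and conclude. The only difference is that you spell out the subring and invertibility arguments explicitly, whereas the paper had already recorded these facts just before stating the theorem.
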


\begin{proof}
Let $K$ be a knot and $\Phi$ be a natural rational associator.
For any knot diagram $\tilde{K}$ of $K$ and for any decomposition
of $\tilde{K}$ into basic elements, we have $I_\Phi(\tilde{K}) \in {\rm
Chord}_{nat}$.
We also have $I_\Phi(\tilde{K}_0) \in {\rm Chord}_{nat}$, and
$I_\Phi(\tilde{K_0})$ is an invertible element. Hence,
$I(K) \in {\rm Chord}_{nat}$, as required.
\end{proof}

\begin{remark}
A weaker estimate (the function $\tilde{b}_p(n)$ is quadratic in $n$)
on denominators in the Kontsevich knot invariant was obtained
in \cite{Thang Le}. To get this estimate, the author uses
associators with values in ``Chinese characters'' instead
of Drinfeld associators.

\end{remark}
\appendix

\section{ Denominator estimates for elements of $\mathfrak{gt}(\mathbb{Q}_p)$}
Here we prove an estimate on the elements $(1,\psi)\in\gt(\mathbb{Q}_p)$ corresponding to the $p$-adic associators constructed in Theorem \ref{Phiestimate} via relation \eqref{psiphi}. In contrast to the linear estimate of Theorem \ref{Phiestimate}, the new estimate has a logarithmic growth. However, this estimate is only satisfied by $\psi$ viewed as a noncommutative series in $\hat{x}=e^x-1$ and $\hat{y}=e^y-1$.

For $p>2$, let $(\lambda,f)\in\text{GT}(\mathbb{Q}_p)$ and $\Phi\in {\rm Assoc}_1(\mathbb{Q}_p)$ be as in Theorem \ref{Phiestimate}. Define $(1,\psi)=\ln(\lambda',f')/\ln\lambda'\in\gt(\mathbb{Q}_p)$, where $(\lambda',f')=(\lambda,f)^{p-1}$.
Then $\Phi$ and $\psi$ satisfy relation \eqref{psiphi}. Since $\Phi^{(p-2)}$ has coefficients in $\Z_p$, the same is true for $\psi^{(p-2)}$. Using the facts that $\ln\lambda'\in p\Z_p$, $f'\in\Z_p\langle\!\langle \hat{x},\hat{y} \rangle\!\rangle$, and $(\lambda',f')=\exp \big(\ln(\lambda')(1,\psi)\big)$, we obtain
\begin{equation}\label{f'est}
f'\in 1+p\Z_p\langle\!\langle \hat{x},\hat{y} \rangle\!\rangle^{\geq1}
  +\Z_p\langle\!\langle \hat{x},\hat{y} \rangle\!\rangle^{\geq p-1}.
\end{equation}

\begin{proposition}  \label{psi}
Let $p>2$ be a prime, and  let $(\lambda', f') \in {\rm GT}_p$ be such that
$\lambda' \in 1+p\Z_p^*$ and $f'$ satisfies \eqref{f'est}. Then the element $(1,\psi)=\ln(\lambda', f')/\ln(\lambda')
\in \mathfrak{gt}(\mathbb{Q}_p)$ is of the form
\begin{equation}  \label{p>2}
\psi \in \Z_p\langle\!\langle \hat{x},\hat{y} \rangle\!\rangle +
\sum_{s=0}^\infty p^{-(s+1)} \Z_p\langle\!\langle \hat{x},\hat{y}
\rangle\!\rangle^{\geq p^s(p-1)} \, \, .
\end{equation}
\end{proposition}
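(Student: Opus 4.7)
My plan is to iterate $p$-th powers in $\mathrm{GT}(\mathbb{Q}_p)$ in order to exploit the fact that $v_p(\ln\lambda') = 1$. Set $c := \ln\lambda'$, so $(\lambda',f') = \exp_{\mathrm{GT}}(c(1,\psi))$, and for each $k \geq 0$ define $(\lambda'_k, f_k) := (\lambda',f')^{p^k}$. Since the $p^k$-th power in $\mathrm{GT}$ corresponds to multiplication by $p^k$ in the Lie algebra, $(\lambda'_k, f_k) = \exp_{\mathrm{GT}}(p^k c(1,\psi))$. Because $\lambda' \in 1+p\mathbb{Z}_p^*$, we have $\lambda'_k \in 1+p^{k+1}\mathbb{Z}_p^*$ and hence $v_p(p^k c) = k+1$. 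The guiding idea is that although $(p^k c)^{-1}$ has worse $p$-adic valuation than $c^{-1}$, the iterated element $f_k - 1$ picks up proportionally many extra factors of $p$ in low degrees, and the balance yields precisely the logarithmic growth in \eqref{p>2}.

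The core of the argument would be a proof by induction on $k \geq 0$ of the estimate
\begin{equation*}
f_k - 1 \in p^{k+1}\mathbb{Z}_p\langle\!\langle \hat x,\hat y\rangle\!\rangle^{\geq 1} + \sum_{s=0}^{\infty} p^{\max(k-s,0)}\mathbb{Z}_p\langle\!\langle \hat x,\hat y\rangle\!\rangle^{\geq p^s(p-1)}.
\end{equation*}
The base case $k=0$ is exactly the hypothesis \eqref{f'est}. For the inductive step, I would expand $(\lambda'_{k+1}, f_{k+1}) = (\lambda'_k, f_k)^p$ via the group law \eqref{grtgroup}. Writing $F_k := f_k - 1$, the $p$-fold iteration produces, modulo commutator corrections sitting in deeper filtration strata, a binomial-type sum $\sum_{j=1}^p \binom{p}{j} F_k^j$. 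For $1 \leq j \leq p-1$, the divisibility $p \mid \binom{p}{j}$ upgrades each existing stratum by one extra factor of $p$; the $j = p$ term has degree $\geq p \cdot p^s(p-1) = p^{s+1}(p-1)$ and opens a new stratum with just the bare $\mathbb{Z}_p$ bound. These two effects precisely advance the estimate from $k$ to $k+1$.

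Extracting $\psi$ is then straightforward: $\ln_{\mathrm{GT}}(\lambda'_k, f_k) = p^k c(1,\psi)$, and this $\mathrm{GT}$-logarithm agrees with the naive non-commutative log $\sum_{n\geq 1}(-1)^{n+1}(f_k - 1)^n/n$ modulo commutator corrections that lie in deeper strata. The estimate on $f_k - 1$ therefore transfers to the same-shape estimate on $\ln f_k$ (the harmonic-series denominators $1/n$ are dominated by the $p$-adic gain from $(f_k-1)^n$). Dividing by $p^k c$, whose valuation is $k+1$, yields for any degree $n$ with $p^s(p-1) \leq n < p^{s+1}(p-1)$ and any $k \geq s$ the bound $v_p(\psi_n) \geq -(s+1)$, which is precisely \eqref{p>2}. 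Since the statement is degree-wise and each degree $n$ can be handled by choosing $k$ large enough, this closes the argument.

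The main obstacle I would anticipate is controlling the substitutional nature of the group law \eqref{grtgroup} during the inductive step. The product $(\lambda_1, f_1)(\lambda_2, f_2)$ involves the substitution $\hat x \mapsto \lambda_2 f_2 \hat x f_2^{-1}$, which is not a mere multiplication, so one must verify that the filtration $p^a \mathbb{Z}_p\langle\!\langle \hat x,\hat y\rangle\!\rangle^{\geq m}$ behaves well under this substitution. The saving observation is that whenever $f_2 - 1 \in p^a \mathbb{Z}_p\langle\!\langle \hat x,\hat y\rangle\!\rangle^{\geq m}$, the commutator $[f_2, \hat x]$ lies in $p^a \mathbb{Z}_p\langle\!\langle \hat x,\hat y\rangle\!\rangle^{\geq m+1}$, so the substitution only produces higher-order corrections. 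Combined with the $p$-divisibility of the non-trivial binomial coefficients $\binom{p}{j}$, this is what drives the induction and produces the logarithmic improvement over Theorem \ref{Phiestimate}.
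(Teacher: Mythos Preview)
Your approach is genuinely different from the paper's and can be made to work, but the two organize the effort in opposite places. The paper never iterates $p$-th powers: it takes the $\mathrm{GT}$-logarithm of $(\lambda',f')$ directly, writing $\psi=\frac{1}{\ln\lambda'}\sum_{n\geq1}\frac{(-1)^{n+1}}{n}\psi_n$ with $\psi_1=f'-1$ and the substitution recursion $\psi_n(x,y)=\psi_{n-1}(\lambda' f' x f'^{-1},\lambda' y)f'(x,y)-\psi_{n-1}(x,y)$, and proves in one induction that $\psi_n\in\sum_{k=0}^{n}p^{k}\,\mathbb{Z}_p\langle\!\langle\hat x,\hat y\rangle\!\rangle^{\geq(n-k)(p-1)}$; the congruence $(1+\hat x)^{\lambda'}-1\equiv(1+p)\hat x\bmod(p^2\hat x,\,p\hat x^2,\,\hat x^p)$ is the sole input, and \eqref{p>2} drops out by counting $s+1=1-k+v_p(n)$. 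Your route instead pushes all the work into the $p$-th-power induction on $f_k$, so that for $k$ large the logarithm at the end is essentially $f_k-1$ (higher powers of $F_k$ land in strictly deeper strata, as one checks from super-additivity of your filtration). Both arguments must tame the substitutional $\mathrm{GT}$ product; the paper does this once inside the $\psi_n$ recursion, while you need it in the $p$-th-power step (your ``binomial modulo commutator corrections'' claim) and again, mildly, to pass from the naive series $\sum(-1)^{n+1}F_k^n/n$ to the actual $\mathrm{GT}$-log. Your version makes the Frobenius-type mechanism (each $p$-th power gains a factor of $p$ in the existing strata and opens a new stratum at degree $p^{s+1}(p-1)$) completely transparent; the paper's version is shorter because it handles every degree with the single element $(\lambda',f')$ and avoids the detour through $f_k$.
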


\begin{proof}
Using the Taylor series expansion of $\ln(\lambda', f')$, we obtain
$$
\psi= \, \frac{1}{\ln(\lambda')} \, \sum_{n=1}^\infty \,
\frac{(-1)^{n+1}}{n} \, \psi_n,
$$
where $\psi_1(x,y)=f'(x,y)-1$, and
\begin{align*}
\psi_n(x,y) & =
\psi_{n-1}(\lambda f'xf'^{-1}, \lambda y)\,f(x,y) - \psi_{n-1}(x,y) \\
& =
\psi_{n-1}(\lambda f'xf'^{-1}, \lambda y)\,\psi_1(x,y) +
\big(\psi_{n-1}(\lambda f'xf^{-1}, \lambda y)-\psi_{n-1}(x,y)\big).
\end{align*}
Note that the transformation $x \mapsto \lambda x$ corresponds to
$$
\hat{x} \mapsto (1+\hat{x})^\lambda -1 \equiv
(1+p) \hat{x} \,\, {\rm mod}\, (p^2\hat{x}, p\hat{x}^2, \hat{x}^p),
$$
and that $\psi_1 \in p\Z_p \langle\!\langle \hat{x}, \hat{y} \rangle\!\rangle
+ \Z_p \langle\!\langle \hat{x}, \hat{y} \rangle\!\rangle^{\geq p-1}$.
By induction on $n$, we verify that
$$
\psi_n \in \sum_{k=0}^n p^{k}
\mathbb{Z}_p\langle\!\langle \hat{x}, \hat{y} \rangle\!\rangle^{\geq(n-k)(p-1)} .
$$
Thus, we have
$$
\psi \in \sum_{n=1}^\infty \sum_{k=0}^n n^{-1} p^{k-1}
\mathbb{Z}_p\langle\!\langle \hat{x}, \hat{y} \rangle\!\rangle^{\geq (n-k)(p-1)} .
$$
Let $s+1=1-k+v_p(n)$ be the exponent of $p$ in the denominator.
For $s\geq 0$, we have $v_p(n)=k+s$ and $n \geq p^{k+s}$. Then
$n-k$ takes its minimal value $p^{s}$ at $k=0$.
This yields
$$
\psi \in \Z_p\langle\!\langle \hat{x},\hat{y} \rangle\!\rangle +
\sum_{s=0}^\infty p^{-(s+1)}
\mathbb{Z}_p\langle\!\langle \hat{x}, \hat{y} \rangle\!\rangle^{\geq p^s(p-1)},
$$
as required.
\end{proof}

\begin{remark}
When viewed as a power series in $\hat{x}$ and $\hat{y}$, the element
$\psi$ in Proposition \ref{psi} is convergent in the $p$-adic topology provided
$p$-adic valuations $v_p(\hat{x})$, and $v_p(\hat{y})$ are strictly
positive. As a power series in $x$ and $y$, it converges
provided $\hat{x}$ and $\hat{y}$ are convergent power series, i.e., if $v_p(x),v_p(y) > 1/(p-1)$.
\end{remark}

In the case of $p=2$ we get the following result.

\begin{proposition}  \label{psi2}
Let $(\lambda, f) \in {\rm GT}_2$ be such that $\lambda \in 1 + 4 + 8 \Z_2$.
Then the element $(1, \psi)=\ln(\lambda,f)/\ln(\lambda) \in \mathfrak{gt}(\mathbb{Q}_2)$
is of the form
\begin{equation} \label{p=2}
\psi \in \sum_{s=0}^\infty  2^{-(s+2)} \mathbb{Z}_2\langle\!\langle
\hat{x}, \hat{y} \rangle\!\rangle^{\geq 2^{s}}.
\end{equation}
\end{proposition}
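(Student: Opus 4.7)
The plan is to mirror the proof of Proposition \ref{psi}, substituting the arithmetic of the prime $2$ for that of an odd prime. Writing $\lambda = 1 + \mu$ with $v_2(\mu) = 2$, one has $v_2(\ln\lambda) = 2$ and, for every positive integer $k$, $v_2(\lambda^k - 1) = 2 + v_2(k)$. The final division by $\ln\lambda$ will therefore cost an extra factor $2^{-2}$, which is the source of the shift from $s+1$ in \eqref{p>2} to $s+2$ in \eqref{p=2}.

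Expand $(1,\psi) = \ln(\lambda,f)/\ln(\lambda)$ using the Taylor series of the logarithm, obtaining $\psi = (\ln\lambda)^{-1}\sum_{n\geq 1}(-1)^{n+1}\psi_n/n$ with $\psi_1 = f - 1$ and the same recursion as in Proposition \ref{psi} (but now built from $(\lambda, f)$ itself rather than from its $(p-1)$-th power). The base case is cleaner than for $p>2$: since the degree-$1$ part of $f$ vanishes for any $(\lambda,f) \in \mathrm{GT}$, we get $\psi_1 \in \mathbb{Z}_2\langle\!\langle \hat{x},\hat{y}\rangle\!\rangle^{\geq 2}$ directly, with no denominators and without needing an analogue of the estimate \eqref{f'est}.

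The substantive ingredient is a layered estimate on the substitution $T_\lambda\colon \hat{x} \mapsto (1+\hat{x})^\lambda - 1$. The Mahler coefficients satisfy $v_2\bigl(\binom{\lambda}{1}\bigr) = 2$, $v_2\bigl(\binom{\lambda}{k}\bigr) \geq 1$ for $k = 2,3$, and $v_2\bigl(\binom{\lambda}{k}\bigr) \geq 0$ for $k \geq 4$; equivalently, on a homogeneous degree-$k$ element, $T_\lambda - \mathrm{id}$ acts as multiplication by $\lambda^k - 1$ modulo higher degrees, with $v_2(\lambda^k - 1) = 2 + v_2(k)$. The qualitative picture is that each doubling of the filtration depth costs at most one additional factor of $2$, which matches precisely the dyadic structure of the right-hand side of \eqref{p=2}. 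One then inducts on $n$ to show that $\psi_n$ lies in a finite sum of terms $2^{-a}\mathbb{Z}_2\langle\!\langle \hat{x},\hat{y}\rangle\!\rangle^{\geq b}$ compatible with this layering, using $v_2(n!) \leq n - 1$ to control the division by $n$ in the Taylor series.

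The main technical obstacle is packaging these competing estimates so that the induction closes on the dyadic form $\sum_{s\geq 0} 2^{-(s+2)}\mathbb{Z}_2\langle\!\langle \hat{x},\hat{y}\rangle\!\rangle^{\geq 2^s}$. For $p>2$, the filtration shift from $T_\lambda - \mathrm{id}$ is a uniform $p-1$ and the gain in $p$-adic valuation is a uniform factor of $p$, making the induction essentially one-parameter. For $p=2$, both quantities degrade progressively and in a coordinated way, so the inductive invariant must track the dyadic layers $2^s$ explicitly rather than relying on a single filtration degree as for odd primes.
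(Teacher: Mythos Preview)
Your setup (Taylor expansion of the logarithm, the recursion for $\psi_n$, the base case $\psi_1=f-1\in\mathbb{Z}_2\langle\!\langle\hat x,\hat y\rangle\!\rangle^{\ge 2}$, and the observation $\ln\lambda\in 4\mathbb{Z}_2^*$) matches the paper exactly. The gap is in your diagnosis of the inductive step. You claim that ``for $p=2$, both quantities degrade progressively'' and that ``the inductive invariant must track the dyadic layers $2^s$ explicitly rather than relying on a single filtration degree as for odd primes.'' This is incorrect: the paper's inductive invariant is the \emph{linear} estimate
\[
\psi_n\in\sum_{k=0}^n 2^{k}\,\mathbb{Z}_2\langle\!\langle\hat x,\hat y\rangle\!\rangle^{\ge n-k},
\]
which is literally the Proposition~\ref{psi} estimate with $p-1$ replaced by $1$. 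The only substitution fact needed is $T_\lambda(\hat x)-\hat x\in 2\mathbb{Z}_2\langle\!\langle\hat x\rangle\!\rangle^{\ge 1}+\mathbb{Z}_2\langle\!\langle\hat x\rangle\!\rangle^{\ge 2}$, i.e.\ the difference operator either gains a factor of $2$ or shifts degree by $1$, exactly as for odd primes. There is no progressive degradation and nothing dyadic in the induction itself.

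The dyadic layers $2^s$ of \eqref{p=2} emerge only \emph{after} the induction, when one assembles $\psi\in\sum_{n\ge 1}\sum_{k=0}^n n^{-1}2^{k-2}\mathbb{Z}_2\langle\!\langle\hat x,\hat y\rangle\!\rangle^{\ge n-k}$ and sets $s=v_2(n)-k$: from $n\ge 2^{v_2(n)}=2^{k+s}$ one gets $n-k\ge 2^s$, with the minimum at $k=0$. Note also that the Taylor series divides by $n$, so the relevant quantity is $v_2(n)$, not $v_2(n!)$; your proposed bound $v_2(n!)\le n-1$ plays no role here. If you try to close the induction directly on the dyadic form you will find no natural invariant to carry, whereas the linear form above closes immediately.
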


\begin{proof}
The degree one contribution in $f$ vanishes. Hence,
for  $(\lambda, f) \in {\rm GT}_2$, we have  $f \in 1 +
\Z_2\langle\!\langle \hat{x},\hat{y} \rangle\!\rangle^{\geq 2}$. That is,
$\psi_1 = f-1 \in \Z_2\langle\!\langle \hat{x},\hat{y} \rangle\!\rangle^{\geq 2}$.
Similarly to the previous proposition,
we obtain
$$
\psi_n \in \sum_{k=0}^n 2^k \mathbb{Z}_2\langle\!\langle
\hat{x}, \hat{y} \rangle\!\rangle^{\geq n-k} ,
$$
and
$$
\psi \in \sum_{n=1}^\infty \sum_{k=0}^n n^{-1} 2^{k-2} \mathbb{Z}_2\langle\!\langle
\hat{x}, \hat{y} \rangle\!\rangle^{\geq n-k} .
$$
Here we used the fact that $\ln(\lambda) \in 4\mathbb{Z}_2^*$.
By putting $s+2=v_2(n) + 2-k$, we get $v_2(n)=k+s$, and, for $s\geq 0$,
we obtain the estimate $n \geq 2^{k+s}$. Again, the minimum of $n-k$ is attained
at $k=0$, which implies the desired estimate for $\psi$,
$$
\psi \in \sum_{s=0}^\infty  2^{-(s+2)} \mathbb{Z}_2\langle\!\langle
\hat{x}, \hat{y} \rangle\!\rangle^{\geq 2^{s}} .
$$
Note that the study of $s=-1$ (corresponding to the first order pole)
is not needed since we already have at least $1/2^2$ in all degrees.
\end{proof}

\end{document}